\documentclass[11pt]{amsart}

\usepackage{amsmath,amssymb,amsthm,mathtools}
\usepackage{a4wide}
\usepackage{tikz-cd}
\usepackage[colorlinks,citecolor=blue,urlcolor=blue]{hyperref}
\usepackage{cleveref}

\renewcommand{\le}{\leqslant}
\renewcommand{\ge}{\geqslant}

\numberwithin{equation}{section}

\newtheorem{theorem}{Theorem}[section]
\newtheorem{lemma}[theorem]{Lemma}
\newtheorem{proposition}[theorem]{Proposition}
\newtheorem{corollary}[theorem]{Corollary}

\theoremstyle{definition}
\newtheorem{definition}[theorem]{Definition}

\newtheorem{question}[theorem]{Question}

\theoremstyle{remark}
\newtheorem{remark}[theorem]{Remark}

\theoremstyle{plain}
\newcounter{maintheorem}

\newtheorem{mainth}[maintheorem]{Theorem}
\crefname{maintheorem}{Theorem}{Theorems}

\DeclareMathOperator{\Hom}{Hom}
\DeclareMathOperator{\supp}{supp}
\DeclareMathOperator{\id}{id}

\begin{document}

\title[Semiprojective Banach lattices]{Semiprojective Banach lattices}

\author[T.~Kania]{Tomasz Kania}
\address{Institute of Mathematics, Czech Academy of Sciences,
\v{Z}itn\'{a} 25, 115~67 Prague 1, Czech Republic
\& Institute of Mathematics, Jagiellonian University,
{\L}ojasiewicza 6, 30-348 Krak\'ow, Poland}
\email{tomasz.marcin.kania@gmail.com}
\thanks{Supported by IM CAS (RVO 67985840).}

\author[M.~Niwi\'nski]{Mariusz Niwi\'nski}
\address{Doctoral School of Exact and Natural Sciences,
Faculty of Mathematics and Computer Science,
Institute of Mathematics, Jagiellonian University,
{\L}ojasiewicza 6, 30-348 Krak\'ow, Poland}
\email{mariusz.niwinski@doctoral.uj.edu.pl}

\date{\today}

\subjclass[2020]{46B42, 46E15, 54C15, 18A30}
\keywords{Banach lattice, semiprojectivity, inductive limit ideal, absolute neighbourhood retract,
$C(X)$-space, finitely presentable object, $\ell$-open}

\begin{abstract}
We introduce a norm-controlled notion of semiprojectivity for Banach lattices,
requiring liftability of contractive lattice homomorphisms through inductive limits of closed ideals
with arbitrarily small loss of norm control.
Our main result establishes that, for a~compact metric space~$X$,
the Banach lattice $C(X)$ is semiprojective if and only if $X$ is an absolute neighbourhood retract.

Notably, this characterisation is strictly more permissive than its $C^*$-algebraic counterpart:
by a theorem of S\o rensen and Thiel,
$C(X)$ is semiprojective in the category of $C^*$-algebras and $*$-homo\-morphisms
if and only if $X$ is an ANR of dimension at most one.
The dimensional obstruction disappears in the Banach-lattice setting because lattice homomorphisms
between $C(K)$-spaces are automatically weighted composition operators,
and therefore no commutation relations need to be lifted.
We also show that uncountable $\ell_1$-sums of $1^+$-projective Banach lattices
with topological order units are semiprojective but need not be $1^+$-projective,
establishing that the two notions are genuinely distinct.
On the negative side, we prove that $\ell_p$ \((1<p<\infty)\) and \(L_p([0,1])\) \((1\le p<\infty)\) as well as Orlicz spaces
are not semiprojective.
\end{abstract}

\maketitle

\section{Introduction}\label{sec:intro}

Projective objects encode extension and lifting phenomena in a single categorical property:
every morphism into a quotient factors through the quotient map.
In the Banach-space world the strictly isometric version of this requirement
is overly rigid, and it is by now standard to relax it to an \emph{approximate} lifting condition
in which the norm of the factoring map is allowed to exceed the original norm
by an arbitrarily small factor $1+\varepsilon$.
This philosophy underlies the notion of a \emph{$1^+$-projective} Banach lattice introduced by
de~Pagter and Wickstead~\cite{dpw}, who showed, among other things,
that $\ell_1$ is $1^+$-projective.
More recently, Avil\'es, Mart\'{\i}nez-Cervantes and Rodr\'{\i}guez Abell\'an~\cite{avilaes2020}
characterised the $1^+$-projective spaces of the form $C(K)$:
\emph{$C(K)$ is $1^+$-projective if and only if $K$ is an absolute neighbourhood retract (ANR)}.

In many situations, however, one need not lift through arbitrary quotients
but only through those arising from increasing unions of ideals---that is, through
\emph{inductive limits}.
The resulting \emph{semiprojective} lifting property is considerably weaker than full projectivity,
yet retains substantial force.
Semiprojectivity and its variants play a central r\^ole in the theory of
$C^*$-algebras, where the notion was introduced by Blackadar~\cite{blackadar85}
and systematically developed in the work of Loring~\cite{loring} and others.
In the commutative $C^*$-setting,
semiprojectivity of $C(X)$ is intimately connected with $X$ being an ANR,
but an additional \emph{dimensional} constraint intervenes:
S\o rensen and Thiel~\cite{sorensen-thiel} proved that
\emph{$C(X)$ is semiprojective in the category of $C^*$-algebras
if and only if $X$ is an ANR with $\dim X\le 1$},
confirming a conjecture of Blackadar~\cite[p.~24]{blackadar-survey}.
The dimensional obstruction arises because commutation relations---which constrain $*$-homomorphisms
from $C(X)$ into a noncommutative ambient algebra---are notoriously hard to lift;
for instance, $C(\mathbb T^2)$ is not semiprojective in the $C^*$-category,
as can be shown using the Voiculescu matrices (see, \emph{e.g.},
\cite[p.~23]{blackadar-survey}).

In a recent survey, Thiel~\cite{thiel-categories} studies semiprojectivity and its
dual notion, semiinjectivity, as general categorical concepts.
In the category of compact metric spaces and continuous maps,
(semi)injective objects are precisely the absolute (neighbourhood) retracts;
thus semiinjectivity of~$X$ in that category corresponds, via Gelfand duality,
to semiprojectivity of~$C(X)$ among commutative $C^*$-algebras.
In the category of groups, the picture is strikingly different:
a group is semiprojective if and only if it is a retract of a free product
of a finitely presented group and a free group, while the trivial group is
the only semiinjective group~\cite[Propositions~3.7 and~3.9]{thiel-categories}.
Thiel's framework highlights that the behaviour of semiprojectivity is
strongly category-dependent, making it natural to investigate the concept in the
Banach-lattice category, where the morphisms---lattice homomorphisms with
controlled norms---differ fundamentally from both $*$-homomorphisms and group
homomorphisms.

The present note records the analogous phenomenon in the Banach-lattice category.
Our main results read as follows.

\begin{mainth}\label{thm:A}
Let $X$ be a compact metric space.
Then the Banach lattice $C(X)$ is semiprojective
if and only if $X$ is an absolute neighbourhood retract.
\end{mainth}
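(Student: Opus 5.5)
The definition I will work with is the one announced in the abstract. $C(X)$ is semiprojective if, for every Banach lattice $E$, every increasing sequence of closed ideals $J_1\subseteq J_2\subseteq\cdots$ with $J=\overline{\bigcup_n J_n}$, every contractive lattice homomorphism $T\colon C(X)\to E/J$ and every $\varepsilon>0$, there exist an $n$ and a lattice homomorphism $S\colon C(X)\to E/J_n$ with $\|S\|\le 1+\varepsilon$ and $\pi_{n}\circ S=T$. Here $\pi_n\colon E/J_n\to E/J$ is the canonical quotient map.

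\emph{Sufficiency.} Let $X$ be an ANR. By the theorem of Avil\'es, Mart\'{\i}nez-Cervantes and Rodr\'{\i}guez Abell\'an~\cite{avilaes2020}, $C(X)$ is $1^+$-projective. Given $T$ and $\varepsilon$ as above, apply $1^+$-projectivity to the quotient map $\pi_1\colon E/J_1\to E/J$. This gives a lift $S\colon C(X)\to E/J_1$ with $\|S\|\le 1+\varepsilon$ and $\pi_1S=T$. So $n=1$ already works, and semiprojectivity is weaker than $1^+$-projectivity. If the paper's definition also asks that $S$ be compatible with further quotients, that is automatic here, since we lift to the first stage.

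\emph{Necessity.} Assume $C(X)$ is semiprojective with $X$ compact metric. Embed $X$ into the Hilbert cube $Q$, which is an AR. It suffices to find a closed neighbourhood $U$ of $X$ in $Q$ and a retraction $U\to X$. Take the decreasing sequence of closed neighbourhoods $U_n=\{q\in Q: d(q,X)\le 1/n\}$, put $E=C(U_1)$, and let $J_n=\{f\in E: f|_{U_n}=0\}$. The $J_n$ form an increasing sequence of closed ideals. The closure of their union is $J=\{f: f|_X=0\}$, because any $f$ vanishing on $X$ can be approximated by functions vanishing on some $U_n$: cut $f$ off where $|f|<\delta$. Then $E/J_n\cong C(U_n)$ and $E/J\cong C(X)$ isometrically as lattices, via the Tietze extension theorem and the standard quotient description. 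Let $T=\id\colon C(X)\to C(X)$. Semiprojectivity yields $n$ and a lattice homomorphism $S\colon C(X)\to C(U_n)$ with $\|S\|\le 1+\varepsilon$ and $(Sf)|_X=f$.

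By the standard representation of lattice homomorphisms between $C(K)$-spaces, $Sf(u)=w(u)f(\varphi(u))$. Here $w=S\mathbf 1\ge 0$ is continuous, and $\varphi$ is a map defined on the open set $\{w>0\}$ and continuous there. Since $(S\mathbf 1)|_X=\mathbf 1$, the set $V=\{w>1/2\}$ is an open neighbourhood of $X$ in $U_n$, hence in $Q$. On $X$ we have $w=1$, and $\varphi|_X=\id_X$ because $f(\varphi(x))=f(x)$ for all $f\in C(X)$ and points are separated. So $\varphi|_V\colon V\to X$ is a retraction from a neighbourhood of $X$ in the AR $Q$, and $X$ is an ANR.

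\emph{Main obstacle.} The step I expect to need the most care is the weighted-composition representation. Specifically, I must check that $\varphi$ is continuous on $\{w>0\}$; this follows from the fact that $u\mapsto \delta_u\circ S$ is weak$^*$-continuous and equals $w(u)\delta_{\varphi(u)}$. I also need the identification of $\overline{\bigcup J_n}$ with $J$. The norm bound $1+\varepsilon$ plays no role in the necessity argument; any lift suffices.
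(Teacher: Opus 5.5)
Your proof is correct and is essentially the paper's argument: sufficiency follows from the $1^+$-projectivity of $C(X)$ for ANRs (Avil\'es--Mart\'{\i}nez-Cervantes--Rodr\'{\i}guez Abell\'an), and necessity uses the tower of ideals of functions vanishing on the closed $1/n$-neighbourhoods of $X$, the weighted-composition form of the lift, and the resulting neighbourhood retraction (the paper merely phrases this contrapositively). Two small remarks: the paper's definition requires $B$ to be separable and fixes $n$ before $\varepsilon$, unlike yours, but your argument covers it because sufficiency gives $n=1$ uniformly for every $E$ and $\varepsilon$, while necessity uses only the separable lattice $E=C(U_1)$ and a single lift; and $\{w>1/2\}$ is open only in $U_n$, so you should intersect it with $\{q: d(q,X)<1/n\}$ to obtain a neighbourhood of $X$ that is open in $Q$, as the paper does.
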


Here semiprojectivity is understood in the sense of Definition~\ref{def:semi} below.
The ``if'' direction is immediate: if $X$ is an ANR, then $C(X)$ is $1^+$-projective
by~\cite[Theorem~1.4]{avilaes2020}, and $1^+$-projectivity evidently implies semiprojectivity.
The content of the theorem lies in the converse:
if $X$ is not an ANR, then $C(X)$ is not semiprojective.
The proof, given in Section~\ref{sec:CX}, exploits the representation of lattice
homomorphisms between $C(K)$-spaces as weighted composition operators
(Lemma~\ref{lem:weighted-composition}).
A hypothetical semiprojective lifting produces, via this representation,
a continuous retraction from a neighbourhood of $X$ onto $X$,
contradicting the assumption that $X$ is not an ANR.

\begin{mainth}\label{thm:B}
Let\/ $\Gamma$ be a set and let\/ $(P_\gamma)_{\gamma\in\Gamma}$ be a family of
$1^+$-projective Banach lattices, each admitting a topological order unit.
Then the Banach lattice
\[
P:=\Bigl(\bigoplus_{\gamma\in\Gamma} P_\gamma\Bigr)_{\ell_1(\Gamma)}
\]
is semiprojective. If\/ $\Gamma$ is countable, then $P$ is $1^+$-projective.
\end{mainth}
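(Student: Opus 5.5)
The plan is to handle the countable case first by a disjointification argument combined with the $1^+$-projectivity of each summand, and then to reduce the uncountable case to a countable one using the inductive-limit structure that semiprojectivity allows (Definition~\ref{def:semi}).

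\emph{Reduction to lifting disjoint ideals.} Fix for each $\gamma$ a topological order unit $e_\gamma\ge0$ of $P_\gamma$ with $\|e_\gamma\|=1$, and regard $P_\gamma$ as a sublattice of $P$. Let $T\colon P\to F$ be a contractive lattice homomorphism into a quotient $F$ of a Banach lattice $E$. Because $T$ is a lattice homomorphism, the elements $a_\gamma:=Te_\gamma\ge0$ are pairwise disjoint. Because $e_\gamma$ is a topological order unit, $T(P_\gamma)$ lies in the closed ideal $\overline{I}(a_\gamma)$ of $F$ generated by $a_\gamma$. Suppose we can find positive, pairwise disjoint lifts $v_\gamma\ge0$ of the $a_\gamma$ (up to arbitrarily small error). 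Then the quotient map sends the closed ideal $\overline{I}(v_\gamma)$ of $E$ onto a closed ideal of $F$ containing $\overline{I}(a_\gamma)$. Applying $1^+$-projectivity of $P_\gamma$ to this restricted quotient map gives a lattice homomorphism $S_\gamma\colon P_\gamma\to\overline{I}(v_\gamma)$ lifting $T|_{P_\gamma}$ with $\|S_\gamma\|\le1+\varepsilon$. The ideals $\overline{I}(v_\gamma)$ are pairwise disjoint, so
\[
S\Bigl((x_\gamma)_\gamma\Bigr):=\sum_\gamma S_\gamma x_\gamma
\]
converges absolutely thanks to the $\ell_1$-norm on $P$. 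It is a lattice homomorphism because the summands have disjoint ranges, it satisfies $\|S\|\le\sup_\gamma\|S_\gamma\|\le1+\varepsilon$, and it lifts $T$.

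\emph{Countable case.} When $\Gamma$ is countable, we apply the above with $F=E/J$ for an arbitrary closed ideal $J$. We then need the standard fact that a disjoint sequence of positive elements of $E/J$ admits disjoint positive lifts. The construction goes as follows. Lift each $a_n$ to some $u_n\ge0$ of almost the same norm, and set
\[
v_n:=\Bigl(u_n-\sum_{k\ne n}2^{k}\,u_k\wedge u_n\Bigr)^+ .
\]
An easier alternative is the recursive disjointification $v_n:=\bigl(u_n-n\sum_{k<n}v_k\bigr)^+$, refined so that the errors lie in $J$ and are summable. This yields $1^+$-projectivity of $P$, and in particular semiprojectivity.

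\emph{Uncountable case.} Here $F=E/J$ with $J=\overline{\bigcup_n J_n}$, and we must lift into some $E/J_n$ rather than into $E$. The only obstruction in the argument above is the disjointness of the lifts of the uncountable family $(a_\gamma)$. I expect this to be the main difficulty, since in general an uncountable disjoint family in $E/J$ need not lift to a disjoint family; this is exactly why $P$ fails to be $1^+$-projective. My first attempt would be to use the freedom of passing to $E/J_n$. For each $\gamma$, choose lifts $u_\gamma\in E/J_1$. Then $u_\gamma\wedge u_\delta\in J/J_1$ for all pairs $\gamma\ne\delta$, and we would look for a single $n$ together with a perturbation that makes the images in $E/J_n$ disjoint. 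In the course of this I would also try to show that only countably many $\gamma$ genuinely interact, for instance those $\gamma$ for which $u_\gamma\wedge u_\delta$ is not already in $J_n$ for the relevant $n$. These countably many would be disjointified as in the countable case, while the remaining ones are already disjoint in $E/J_n$.

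If that pigeonhole-type argument fails, the fallback is to exploit the precise form of Definition~\ref{def:semi}: liftability is required only with arbitrarily small loss of norm and only into some stage $E/J_n$. With this, one can try to arrange all cross terms $u_\gamma\wedge u_\delta$ to vanish modulo a single $J_n$ by truncating each $u_\gamma$ at a level $\delta>0$, namely replacing it by $(u_\gamma-\delta\mathbf 1)^+$ in an order-unit-controlled sense. The resulting $\varepsilon$ loss of norm would be absorbed into the $1+\varepsilon$ allowance.
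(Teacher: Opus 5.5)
\textbf{The uncountable case has a genuine gap.} What you give there are two tentative strategies, not an argument, and neither uses the hypothesis that makes the theorem work: Definition~\ref{def:semi} only concerns \emph{separable} $E$.

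In a separable Banach lattice every family of pairwise disjoint non-zero positive elements is countable. For disjoint $u,u'\ge 0$ one has $\|u-u'\|=\|u+u'\|\ge\|u\|$, so each set $\{\gamma:\|u_\gamma\|\ge 1/m\}$ is $1/m$-separated. Since $E/J$ is separable, only countably many $a_\gamma=Te_\gamma$ are non-zero. Your own observation $T(P_\gamma)\subseteq\overline{I}(a_\gamma)$ then shows that $T$ vanishes on every other summand. Hence $T$ factors through the coordinate projection onto a countable subsum $(\bigoplus_{\gamma\in\Gamma_0}P_\gamma)_{\ell_1}$, which is $1^+$-projective. Lift the induced map all the way to $E$ and compose with $E\to E/J_1$; then $n=1$ works for every $\varepsilon$.

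So the obstruction you anticipate, an uncountable disjoint family in $E/J$ with no disjoint lift, cannot occur in this setting, and the intermediate stages $E/J_n$ are never needed. Your fallback would fail in any case. $E$ need not contain a unit $\mathbf 1$, and truncating $u_\gamma$ changes its image in $E/J$, which destroys the exact identity $\pi_n\circ\psi=T$. The $\varepsilon$ in Definition~\ref{def:semi} relaxes only the norm, not the lifting equation.

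\textbf{The countable case is a reasonable alternative route but is incomplete as written.} You reduce to disjoint principal closed ideals instead of citing \cite[Theorem~11.8]{dpw} as the paper does. This needs \emph{exact} disjoint lifts $Qv_\gamma=a_\gamma$, not lifts ``up to small error'', and your formulas do not produce them:
\begin{itemize}
\item the series $\sum_{k\ne n}2^k\,u_k\wedge u_n$ need not converge;
\item $(u_n-n\sum_{k<n}v_k)^+$ is not disjoint from the earlier $v_k$.
\end{itemize}
There are two easy fixes. First, you can replace $u_k$ by $u_k':=(u_k-4^k\sum_{j<k}u_j)^+$, taking $\sup_k\|u_k\|<\infty$. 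This has the same image in $E/J$ and satisfies $0\le u_k'\wedge u_j'\le 4^{-k}u_k$ for $j<k$, after which your first formula converges. Alternatively, lift the contractive lattice homomorphism $\ell_1\to E/J$, $e_n\mapsto a_n$, using $1^+$-projectivity of $\ell_1$.

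You should also check that $Q$ restricted to $\overline{I}(v_\gamma)$ is, isometrically, the quotient map by $\overline{I}(v_\gamma)\cap J$; for $x\ge 0$ and $z\in x+J$, use $x\wedge|z|$. Only then does $1^+$-projectivity of $P_\gamma$ apply to the restricted map.
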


Recall that an element $e\ge 0$ in a Banach lattice~$E$ is a
\emph{topological order unit} if the closed ideal it generates equals all of~$E$,
that is, if every $x\in E$ can be approximated by elements $y$ satisfying $|y|\le c\,e$
for some constant~$c>0$.
Every $C(K)$-space admits a topological order unit, namely the constant function
$\mathbf{1}_K$, since $|f|\le \|f\|_\infty\,\mathbf{1}_K$ for all $f\in C(K)$.

\begin{corollary}\label{cor:l1-Gamma-intro}
For every set\/ $\Gamma$, the Banach lattice $\ell_1(\Gamma)$ is semiprojective.
It is $1^+$-projective if and only if\/ $\Gamma$ is countable.
\end{corollary}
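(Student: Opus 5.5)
The first two claims come straight from Theorem~\ref{thm:B}. Take each $P_\gamma=\mathbb R$. This is $1^+$-projective, since $\mathbb R=C(\{\ast\})$ and a point is an ANR, so \cite[Theorem~1.4]{avilaes2020} applies; alternatively, one lifts a positive element directly. Its topological order unit is $1$. The $\ell_1(\Gamma)$-sum of copies of $\mathbb R$ is $\ell_1(\Gamma)$ itself. Theorem~\ref{thm:B} therefore shows that $\ell_1(\Gamma)$ is semiprojective for every $\Gamma$, and $1^+$-projective whenever $\Gamma$ is countable. This settles semiprojectivity and the ``if'' half of the equivalence.

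The remaining step is to show that $\ell_1(\Gamma)$ fails to be $1^+$-projective when $\Gamma$ is uncountable. This is the real obstacle, and I would first look for it in the literature. De~Pagter and Wickstead~\cite{dpw} proved that a $1^+$-projective Banach lattice which is a sum of one-dimensional pieces must be countably generated. More precisely, I recall their result that $\ell_1(\Gamma)$ is projective only when $\Gamma$ is countable. If that result is available, I would cite it.

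If a self-contained argument is needed, I would use $C(K)$ for a compact $K$ containing an uncountable family of pairwise disjoint open sets; $K=\{0,1\}^\Gamma$ would do, or $[0,1]^\Gamma$. I would then build a quotient map $Q\colon E\to\ell_1(\Gamma)$ that does not admit a $(1+\varepsilon)$-contractive lifting of the identity. A natural choice is $E=$ the free Banach lattice $FBL(\ell_1(\Gamma))$, or some lattice of continuous functions on $B_{\ell_\infty(\Gamma)}$. A lift would give pairwise disjoint positive elements $x_\gamma$ with $\|x_\gamma\|\le 1+\varepsilon$ mapping onto the unit vectors $e_\gamma$. I would then argue that such an uncountable disjoint family cannot exist there with norms bounded below. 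The reason is that $FBL(E)$ satisfies the $\sigma$-bounded chain condition, or more simply the countable chain condition, for disjoint families. This ccc property of free Banach lattices is due to Avil\'es, Plebanek and Rodr\'{\i}guez Abell\'an. Each lifted element satisfies $\|x_\gamma\|\ge 1$, since $Q$ is contractive and $\|e_\gamma\|=1$. An uncountable disjoint family of norm-at-least-one elements in a ccc lattice is a contradiction.

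In summary, the order of steps is as follows. First, specialise Theorem~\ref{thm:B} to $P_\gamma=\mathbb R$. Second, realise $\ell_1(\Gamma)$ as a lattice quotient of $FBL(\ell_1(\Gamma))$, or of $FBL$ of a set. Third, observe that a lift preserves disjointness, since it is a lattice homomorphism, and keeps norms at least one. Fourth, invoke the countable chain condition of free Banach lattices to rule out uncountable $\Gamma$. The main risk is confirming that the ccc result applies to the particular free object used. If it does not, I would instead fall back on the direct citation from~\cite{dpw}.
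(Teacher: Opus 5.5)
Your proposal is correct and follows the paper's approach. Theorem~\ref{thm:B} with $P_\gamma=\mathbb R$ gives semiprojectivity and the countable case. For uncountable $\Gamma$ the paper simply cites \cite[Corollary~10.5]{dpw}: in a $1^+$-projective Banach lattice every disjoint family of non-zero positive elements is countable, and this is applied to $\{e_\gamma\}_{\gamma\in\Gamma}$. Your argument, which lifts the inverse of the induced isomorphism into a ccc free Banach lattice, is essentially the proof of that corollary specialised to $\ell_1(\Gamma)$. If you use $FBL$ over the set $\Gamma$, the ccc is immediate, because its elements are continuous functions on the ccc space $[-1,1]^\Gamma$.

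Delete the $C(K)$ aside, which contains a false claim. $\{0,1\}^\Gamma$ and $[0,1]^\Gamma$ are products of separable spaces, hence ccc, so they contain no uncountable disjoint family of open sets. Moreover, every lattice quotient of $C(K)$ is some $C(F)$, which has a strong order unit, whereas $\ell_1(\Gamma)$ for infinite $\Gamma$ does not.
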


In particular, Corollary~\ref{cor:l1-Gamma-intro} shows that semiprojectivity and
$1^+$-projectivity are genuinely distinct notions for Banach lattices.
The key observation is that any lattice homomorphism from the $\ell_1$-sum
into a separable target annihilates all but countably many coordinates
(the images of the coordinate order units form a disjoint family,
and disjoint families in separable Banach lattices are countable),
so the lifting problem reduces to a countable $\ell_1$-sum of $1^+$-projectives.

On the other hand, we show that $\ell_p$ is \emph{not} semiprojective for $1<p<\infty$
(Corollary~\ref{cor:ell-p-not-semi}),
using a path-space construction that converts semiprojectivity into an exact lifting
property and then applying a growth estimate of
Avil\'es, Mart\'{\i}nez-Cervantes and Rodr\'{\i}guez Abell\'an~\cite{avilaes-c0}.

\subsection*{Comparison with the $C^*$-algebraic setting}
It is instructive to contrast Theorem~\ref{thm:A} with the S\o rensen--Thiel
result~\cite{sorensen-thiel}.
In the $C^*$-category, semiprojectivity of $C(X)$ requires $\dim X\le 1$
\emph{in addition to} $X$ being an ANR;
in the Banach-lattice category the dimension plays no r\^ole.
The reason is structural:
a lattice homomorphism $T\colon C(X)\to C(Y)$ is automatically a weighted composition operator
(Lemma~\ref{lem:weighted-composition}), determined by a continuous weight and a continuous map $Y\supseteq U\to X$.
In particular, $T$ is completely determined by its values on one-dimensional fibres,
and no commutation relations need to be preserved.
By contrast, a $*$-homomorphism $C(X)\to B$ for a general $C^*$-algebra $B$
must respect commutativity of $C(X)$, and lifting such commutation constraints
through inductive limits fails for $\dim X\ge 2$, as the Voiculescu-matrix obstruction
demonstrates.

This makes the Banach-lattice category a natural setting in which
the \emph{purely topological} characterisation
``semiprojective $\Longleftrightarrow$ ANR'' holds without dimensional qualifications,
mirroring the classical shape-theoretic equivalence more faithfully
than the $C^*$-category does.
The following table summarises the comparison (all spaces are compact and metrisable).

\medskip
\begin{center}
\renewcommand{\arraystretch}{1.3}
\begin{tabular}{lcc}
\hline
\textbf{Property of $C(X)$} & \textbf{Banach lattices} & \textbf{$C^*$-algebras}\\
\hline
$1^+$-projective / projective & $X$ is an ANR & $X$ is an AR, $\dim X\le 1$ \\
Semiprojective & $X$ is an ANR & $X$ is an ANR, $\dim X\le 1$ \\
\hline
\end{tabular}
\end{center}
In the Banach-lattice column, the relevant notion is \(1^+\)-projectivity
(which is called simply \emph{projective} in \cite{dpw});
in the \(C^*\)-column, \emph{projective} is used in the usual exact sense.

For commutative $C^*$-algebras, projectivity of $C(X)$
was characterised by Chigogidze and Dranishnikov~\cite{chigogidze-dranishnikov},
and semiprojectivity by S\o rensen and Thiel~\cite{sorensen-thiel}.

\medskip

\noindent\textbf{Notation and conventions.}
All Banach lattices are real.
A~\emph{lattice homomorphism} between Banach lattices
is a bounded linear map $T$ satisfying $T|x|=|Tx|$ for every~$x$.
\emph{Closed ideals} are understood in the lattice sense:
a closed subspace $J$ of a Banach lattice $B$ is a (lattice) ideal
if $|b|\le |a|$ and $a\in J$ imply $b\in J$.
For a compact Hausdorff space~$K$ we write $C(K)$
for the Banach lattice of continuous real-valued functions on~$K$
equipped with the supremum norm and the pointwise ordering.
If $F\subseteq K$ is closed, we set $I_F:=\{f\in C(K): f|_F=0\}$.

\section{Semiprojectivity and norm control}\label{sec:semi}

We begin by recording the definitions that will be used throughout.

\begin{definition}[Inductive limit ideal]\label{def:ili}
Let $B$ be a Banach lattice.
A closed ideal $I\subseteq B$ is an \emph{inductive limit ideal} if there exists
an increasing sequence of closed ideals $I_1\subseteq I_2\subseteq \cdots\subseteq B$ such that
\[
I=\overline{\bigcup_{n=1}^\infty I_n}.
\]
We write $Q_n\colon B\to B/I_n$ for the quotient map and
$\pi_n\colon B/I_n\to B/I$ for the canonical surjection induced by the
inclusion $I_n\subseteq I$.
\end{definition}

\begin{definition}[Semiprojectivity]\label{def:semi}
A Banach lattice $A$ is \emph{semiprojective} if for every \emph{separable} Banach lattice~$B$,
every inductive limit ideal $I=\overline{\bigcup_n I_n}\subseteq B$,
and every contractive lattice homomorphism $\phi\colon A\to B/I$,
there exist $n\in\mathbb N$ and, for every $\varepsilon>0$,
a lattice homomorphism $\psi\colon A\to B/I_n$ satisfying
\[
\|\psi\|\le 1+\varepsilon
\qquad\text{and}\qquad
\pi_n\circ \psi=\phi.
\]
In other words, $\phi$ eventually lifts through the tower of quotients with arbitrarily
small loss of norm control, as depicted in the following diagram:
\[
\begin{tikzcd}[row sep=1.6em, column sep=3em]
B \arrow[d, two heads, "Q_1"'] \\
B/I_1 \arrow[d, two heads] \\
\vdots \arrow[d, two heads] \\
B/I_n \arrow[d, two heads, "\pi_n"'] & A \arrow[l, dashed, "\psi"'] \arrow[dl, "\phi"] \\
B/I
\end{tikzcd}
\qquad\quad \pi_n\circ\psi=\phi,\quad \|\psi\|\le 1+\varepsilon.
\]
Here the solid vertical arrows are the canonical surjections,
$\phi$ is a given contractive lattice homomorphism, and the dashed arrow $\psi$ is
the required approximate lift satisfying $\pi_n\circ \psi=\phi$.
Crucially, the index $n$ must be chosen \emph{independently} of $\varepsilon>0$.
\end{definition}

\begin{remark}\label{rem:countable-exhaustion}
If $B$ is separable and $I\subseteq B$ is a closed ideal, then $I$ admits an
increasing exhaustion by closed ideals. Indeed, choose a dense sequence
$(x_n)_{n\ge 1}$ in $I$, and let $I_n$ be the closed ideal generated by
$\{x_1,\dots,x_n\}$. Then
\[
I=\overline{\bigcup_{n=1}^\infty I_n}.
\]
Thus, once the ambient Banach lattice is separable, the adjective
``inductive limit ideal'' does not restrict the class of ideals under
consideration; what remains essential in Definition~\ref{def:semi} is the
chosen approximating sequence of ideals.
\end{remark}

\begin{remark}\label{rem:finitely-presentable}
In the theory of locally presentable categories
(see Ad\'amek and Rosick\'y~\cite[Definition~1.1]{adamek-rosicky}),
an object~$K$ of a category~$\mathcal C$ is \emph{finitely presentable}
if the covariant hom-functor
$\Hom_{\mathcal C}(K,{-})$ preserves directed colimits.
For algebraic categories (varieties of algebras) this amounts to
$K$ being presented by finitely many generators
and finitely many relations;
in particular, the quotient of a free algebra by a finitely generated
ideal possesses the relevant lifting property with respect to directed unions.

As observed by L.~Positselski, Definition~\ref{def:semi} can be viewed as a
norm-controlled, approximate analogue of finite presentability.
The r\^ole of the directed colimit is played by
the inductive limit ideal $I=\overline{\bigcup I_n}$,
and the factoring morphism is required to exist only up to
an arbitrarily small norm defect~$\varepsilon$.
The restriction to \emph{sequential} inductive limits (rather than arbitrary directed families)
is a convenient separable variant, reflecting that many norm-approximation and lifting arguments
in Banach-space settings are naturally formulated along sequences.
It would be interesting to develop this analogy further within a systematic framework for
``approximate locally presentable categories'' of Banach lattices;
we leave this for future investigation.
\end{remark}

\begin{remark}[A categorical perspective]\label{rem:cat}
For each $\lambda\ge 1$ one can form a category $\mathbf{BL}_\lambda$
whose objects are Banach lattices and whose morphisms are lattice homomorphisms
of norm at most~$\lambda$.
The family $(\mathbf{BL}_\lambda)_{\lambda\ge 1}$ is \emph{multiplicatively graded}:
the composition of a $\lambda$-morphism with a $\mu$-morphism is a $(\lambda\mu)$-morphism.
The ubiquitous factor $1+\varepsilon$ in norm-controlled lifting problems
thus reflects the freedom to work in any category $\mathbf{BL}_\lambda$ with $\lambda>1$.

To eliminate the quantifier over $\varepsilon$ one can pass to an auxiliary
\emph{approximately contractive} category $\mathbf{BL}_{1^+}$.
Its objects are Banach lattices, and a morphism $E\to F$ is a sequence
$(T_k)_{k\ge 1}$ of lattice homomorphisms $T_k\colon E\to F$ such that
$\|T_k\|\le 1+\varepsilon_k$ for some real sequence $\varepsilon_k\to 0$.
Composition is defined termwise:
\[
(S_k)_{k\ge 1}\circ (T_k)_{k\ge 1}:=(S_k\circ T_k)_{k\ge 1}.
\]
This is well-defined since, if $\|T_k\|\le 1+\varepsilon_k$ and
$\|S_k\|\le 1+\delta_k$ with $\varepsilon_k,\delta_k\to 0$, then
\[
\|S_k\circ T_k\|\le (1+\delta_k)(1+\varepsilon_k)=1+\gamma_k,
\qquad
\gamma_k:=\varepsilon_k+\delta_k+\varepsilon_k\delta_k\to 0.
\]
Every contractive lattice homomorphism~$T$ embeds into $\mathbf{BL}_{1^+}$
via the constant sequence $(T,T,\dots)$.
In this language, $A$ is semiprojective precisely when every contractive
$\phi\colon A\to B/I$ lifts, for some~$n$,
to a morphism $(\psi_k)\colon A\to B/I_n$ in $\mathbf{BL}_{1^+}$
satisfying $\pi_n\circ \psi_k=\phi$ for all~$k$.
\end{remark}

\begin{proposition}[Retracts preserve semiprojectivity]\label{prop:retract}
Let $A$ and $P$ be Banach lattices and suppose that there exist \emph{contractive}
lattice homomorphisms $i\colon A\to P$ and $r\colon P\to A$ with
$r\circ i=\id_A$.
If $P$ is semiprojective, then $A$ is semiprojective.
\end{proposition}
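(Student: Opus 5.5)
The plan is the standard retract argument: transport the lifting problem for $A$ to one for $P$ via $r$, lift there, and come back via $i$. Fix a separable Banach lattice $B$, an inductive limit ideal $I=\overline{\bigcup_n I_n}\subseteq B$, and a contractive lattice homomorphism $\phi\colon A\to B/I$. Consider
\[
\Phi:=\phi\circ r\colon P\to B/I .
\]
This is a lattice homomorphism, being a composition of lattice homomorphisms, and it is contractive because $\|\Phi\|\le\|\phi\|\,\|r\|\le 1$. Since $P$ is semiprojective, Definition~\ref{def:semi} supplies an index $n\in\mathbb N$ such that for every $\varepsilon>0$ there is a lattice homomorphism $\Psi_\varepsilon\colon P\to B/I_n$ with $\|\Psi_\varepsilon\|\le 1+\varepsilon$ and $\pi_n\circ\Psi_\varepsilon=\Phi$. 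The index $n$ depends only on $\Phi$, hence only on $\phi$, and not on $\varepsilon$.

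We then keep this same $n$ and, for each $\varepsilon>0$, set $\psi_\varepsilon:=\Psi_\varepsilon\circ i\colon A\to B/I_n$. It is a lattice homomorphism, and since $i$ is contractive,
\[
\|\psi_\varepsilon\|\le\|\Psi_\varepsilon\|\,\|i\|\le 1+\varepsilon .
\]
It also lifts $\phi$:
\[
\pi_n\circ\psi_\varepsilon=\pi_n\circ\Psi_\varepsilon\circ i=\phi\circ r\circ i=\phi .
\]
This is exactly the condition in Definition~\ref{def:semi} for $A$, with $n$ chosen independently of $\varepsilon$.

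No step is a real obstacle. The only points to check are, first, that contractivity of both $i$ and $r$ is what keeps the norm bound at $1+\varepsilon$ instead of picking up a multiplicative constant, and second, that the order of quantifiers ($n$ before $\varepsilon$) survives the transfer. The second holds because $\Phi$ is fixed before $\varepsilon$ is chosen. Separability of $B$ and the tower of ideals are used only as hypotheses of Definition~\ref{def:semi} applied to $P$, so they need no further attention.
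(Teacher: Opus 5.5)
Your proof is correct and follows essentially the same route as the paper's: form $\phi\circ r$, lift it at a fixed stage $B/I_n$ using semiprojectivity of $P$, and precompose the lifts with $i$. Your check that $n$ is fixed before $\varepsilon$ and that contractivity of $i$ and $r$ preserves the $1+\varepsilon$ bound matches the paper's argument.
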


\begin{proof}
Let $B$ be a separable Banach lattice and let
$I=\overline{\bigcup_{n=1}^\infty I_n}\subseteq B$ be an inductive limit ideal.
Let $\phi\colon A\to B/I$ be a contractive lattice homomorphism.
Then $\phi\circ r\colon P\to B/I$ is a contractive lattice homomorphism.
Since $P$ is semiprojective, there exists $n\in\mathbb N$ such that for every
$\varepsilon>0$ there is a lattice homomorphism
$\widetilde\psi_\varepsilon\colon P\to B/I_n$ satisfying
$\|\widetilde\psi_\varepsilon\|\le 1+\varepsilon$ and
$\pi_n\circ \widetilde\psi_\varepsilon=\phi\circ r$.
Set $\psi_\varepsilon:=\widetilde\psi_\varepsilon\circ i\colon A\to B/I_n$.
Then $\psi_\varepsilon$ is a lattice homomorphism,
$\|\psi_\varepsilon\|\le 1+\varepsilon$, and
\[
\pi_n\circ \psi_\varepsilon
=\pi_n\circ \widetilde\psi_\varepsilon\circ i
=\phi\circ r\circ i
=\phi.
\]
Thus $A$ is semiprojective.
\end{proof}

\section{A topological obstruction for $C(X)$}\label{sec:CX}

In this section we prove the non-trivial direction of Theorem~\ref{thm:A}:
if $X$ is a compact metric space which is not an ANR,
then $C(X)$ is not semiprojective.

\subsection{Ideals in $C(K)$}

If $F$ is a closed subset of a compact Hausdorff space~$K$,
the restriction map $\rho_F\colon C(K)\to C(F)$ is a surjective
contractive lattice homomorphism with kernel~$I_F$,
hence it induces an isometric lattice isomorphism $C(K)/I_F\cong C(F)$.

\begin{lemma}\label{lem:limit-ideal}
Let $Y$ be a compact metric space, let $X\subseteq Y$ be closed,
and fix a decreasing sequence of compact neighbourhoods
$Y_1\supseteq Y_2\supseteq\cdots\supseteq X$
with $\bigcap_{n} Y_n=X$.
Set $I:=I_X$ and $I_n:=I_{Y_n}$ in $C(Y)$.
Then $I=\overline{\bigcup_{n=1}^\infty I_n}$.
\end{lemma}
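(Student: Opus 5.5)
We prove the two inclusions separately. Since $X\subseteq Y_n$, every $f\in I_n$ vanishes on $X$, so $I_n\subseteq I$ for all $n$. Because $I$ is closed, this already gives $\overline{\bigcup_n I_n}\subseteq I$. The content of the lemma is therefore the reverse inclusion: every $f\in C(Y)$ with $f|_X=0$ is a uniform limit of functions vanishing on some $Y_n$.

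Fix $f\in I$ and $\varepsilon>0$. We truncate $f$ near zero by setting
\[
g:=\bigl(f-\varepsilon\bigr)^+-\bigl(f+\varepsilon\bigr)^-,
\]
that is, $g=\operatorname{sign}(f)\,(|f|-\varepsilon)^+$. This function is continuous, satisfies $\|f-g\|_\infty\le\varepsilon$, and vanishes on the set $\{|f|\le\varepsilon\}$. It therefore suffices to find $n$ with $Y_n\subseteq\{|f|<\varepsilon\}$, since then $g\in I_n$.

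Put $U:=\{y\in Y:|f(y)|<\varepsilon\}$. This set is open and contains $X$, because $f|_X=0$. The sets $Y_n\setminus U$ are compact and decreasing, and their intersection is $\bigl(\bigcap_n Y_n\bigr)\setminus U=X\setminus U=\emptyset$. By the finite intersection property, $Y_n\setminus U=\emptyset$ for some $n$, that is, $Y_n\subseteq U$. Hence $g\in I_n$ and $\operatorname{dist}(f,I_n)\le\varepsilon$. Since $\varepsilon>0$ was arbitrary, $f\in\overline{\bigcup_n I_n}$.

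The only step needing care is this compactness step, which shows that a decreasing sequence of compact sets with intersection $X$ is eventually contained in any given open neighbourhood of $X$. It uses only the compactness of the $Y_n$, not the hypothesis that the $Y_n$ are neighbourhoods of $X$. The truncation step is routine.
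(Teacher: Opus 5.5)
Your proof is correct and follows the paper's argument. It has the same easy inclusion and the same compactness step showing $Y_n\subseteq U=\{|f|<\varepsilon\}$ for some $n$; you use the finite intersection property where the paper extracts a convergent subsequence, which has the small advantage of not needing metrisability. The only real difference is the approximant: you use the lattice truncation $(f-\varepsilon)^+-(f+\varepsilon)^-$, whereas the paper multiplies $f$ by an Urysohn function $u$ vanishing on $Y_n$ and equal to $1$ off $U$. Both give a function in $I_n$ within $\varepsilon$ of $f$.
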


\begin{proof}
Since $I_n\subseteq I$ for all~$n$, the inclusion
$\overline{\bigcup_n I_n}\subseteq I$ is clear.
For the reverse, let $f\in I$ and $\varepsilon>0$.
As $f$ vanishes on~$X$ and is continuous,
there exists an open set $U\supseteq X$ in $Y$ such that $|f(y)|<\varepsilon$ for all $y\in U$.
Choose $n$ with $Y_n\subseteq U$.
Such an $n$ exists since $(Y_n)$ is a decreasing sequence of compact sets with
$\bigcap_n Y_n=X\subseteq U$:
otherwise one could pick $y_n\in Y_n\setminus U$ and extract a convergent subsequence
with limit in $\bigcap_n Y_n=X$, contradicting $X\subseteq U$.
By Urysohn's lemma (applied in the normal space~$Y$) there is a
function $u\in C(Y)$ with $0\le u\le 1$, $u|_{Y_n}=0$ and $u|_{Y\setminus U}=1$.
Set $g:=uf$.
Then $g|_{Y_n}=0$, so $g\in I_n$, and
\[
\|f-g\|_\infty=\sup_{y\in Y}\bigl|(1-u(y))f(y)\bigr|\le \varepsilon,
\]
since $1-u$ is supported on~$U$, where $|f|<\varepsilon$.
Thus $f\in\overline{\bigcup_n I_n}$.
\end{proof}

\subsection{Lattice homomorphisms between $C(K)$-spaces}

The following lemma is standard; we include a short proof for the reader's convenience.

\begin{lemma}\label{lem:eval}
Let $X$ be a compact Hausdorff space and let $\varphi\colon C(X)\to \mathbb R$ be a non-zero lattice
homomorphism.
Then there exist unique $x\in X$ and $c>0$ such that $\varphi(f)=c\,f(x)$ for all $f\in C(X)$.
\end{lemma}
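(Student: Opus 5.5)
The plan is to identify $\varphi$ through its kernel, which will turn out to be a maximal ideal of the form $I_{\{x\}}$. Since $\varphi$ is positive (as $\varphi(f)=\varphi(|f|)\ge 0$ for $f\ge 0$), it is bounded with $\|\varphi\|=\varphi(\mathbf 1_X)=:c$. Moreover $c>0$: if $\varphi(\mathbf 1_X)=0$, then $|\varphi(f)|\le\varphi(|f|)\le \|f\|_\infty\varphi(\mathbf 1_X)=0$, so $\varphi=0$.

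The key step is to locate the point $x$. We use the lattice identity $\varphi(f\wedge g)=\varphi(f)\wedge\varphi(g)$, which follows from $f\wedge g=\tfrac12(f+g-|f-g|)$. Let $Z:=\{x\in X:\ f(x)=0\text{ for all } f\ge 0 \text{ with }\varphi(f)=0\}$.

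\emph{Claim: $Z\neq\emptyset$.} Suppose not. Then for each $y\in X$ there is $f_y\ge 0$ with $\varphi(f_y)=0$ and $f_y(y)>0$. By compactness finitely many of these satisfy $h:=f_{y_1}+\dots+f_{y_k}>0$ on $X$, and $\varphi(h)=0$. Then $h\ge\delta\mathbf 1_X$ for some $\delta>0$, so $c\le\delta^{-1}\varphi(h)=0$, a contradiction.

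\emph{Claim: $Z$ is a single point.} Suppose $x\ne y$ both lie in $Z$. Take disjoint open neighbourhoods and, by Urysohn, functions $u,v\ge0$ with $u(x)=1$, $v(y)=1$ and $u\wedge v=0$. Then $\varphi(u)\wedge\varphi(v)=0$, so one of them, say $\varphi(u)$, vanishes. But $u\ge 0$ and $u(x)\ne0$, contradicting $x\in Z$.

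Having found $x$, we show $\varphi(f)=c f(x)$. Fix $x\in Z$ and take $f\ge 0$ with $f(x)=0$; the goal is $\varphi(f)=0$. For $\varepsilon>0$ set $g:=(f-\varepsilon)^+$, which vanishes on a neighbourhood $V$ of $x$.

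For each $y\notin V$ we have $y\notin Z$, because $Z=\{x\}$. Hence there is $f_y\ge0$ with $\varphi(f_y)=0$ and $f_y(y)>0$. By compactness of $X\setminus V$, a finite sum $h$ of such functions is $\ge\delta>0$ on $X\setminus V$, and $\varphi(h)=0$. Then $g\le (\|g\|_\infty/\delta)\,h$, so $\varphi(g)=0$.

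It follows that $\varphi(f)\le\varphi(g)+\varepsilon\varphi(\mathbf 1_X)=\varepsilon c$. Letting $\varepsilon\to0$ gives $\varphi(f)=0$. For general $f$, write $f-f(x)\mathbf 1_X=(f-f(x))^+-(f-f(x))^-$; both parts are nonnegative and vanish at $x$. Hence $\varphi(f)=f(x)\varphi(\mathbf 1_X)=c f(x)$.

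Uniqueness is then immediate. The constant is determined by $c=\varphi(\mathbf 1_X)$. If $c f(x)=c' f(x')$ for all $f$ with $x\ne x'$, then an Urysohn function with $f(x)=1$ and $f(x')=0$ gives $c=0$, which is impossible.

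The main obstacle is the second claim, that $Z$ is a singleton. This is the one place where the lattice-homomorphism property, rather than mere positivity, is really used; positivity alone would only produce a measure. The rest is a compactness bookkeeping argument.
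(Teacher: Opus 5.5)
Your proof is correct, but it takes a genuinely different route from the paper. The paper normalises $\psi:=c^{-1}\varphi$ and invokes the Riesz--Markov theorem to write $\psi(f)=\int_X f\,d\mu$ for a Radon probability measure $\mu$. It then uses inner regularity together with $\psi(f\wedge g)=\psi(f)\wedge\psi(g)$ to show that $\supp(\mu)$ cannot contain two distinct points, whence $\mu=\delta_x$.

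You instead define the ``support'' intrinsically, as the common zero set $Z$ of the positive elements of $\ker\varphi$. Your non-emptiness claim and your final identification $\varphi(f)=c\,f(x)$ are compactness arguments that use only positivity and $\varphi(\mathbf 1_X)>0$. They replace, respectively, the existence of a representing measure with non-empty support and the fact that a Radon measure supported on a single point is a point mass. Your singleton claim is the paper's disjointness argument, carried out on $Z$ rather than on $\supp(\mu)$.

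The paper's version is shorter because it outsources the analytic content to Riesz--Markov and the regularity of Radon measures. Yours is fully self-contained, needing only Urysohn's lemma and compactness, and avoids measure theory altogether. It also makes transparent, as you note, that the lattice-homomorphism property is used in exactly one step.
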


\begin{proof}
Set $c:=\varphi(\mathbf{1})>0$ (where $\mathbf{1}$ denotes the constant function~$1$)
and $\psi:=c^{-1}\varphi$.
Then $\psi$ is a unital positive linear functional of norm~$1$, so by the
Riesz--Markov representation theorem there is a Borel probability measure $\mu$ on~$X$
with $\psi(f)=\int_X f\,d\mu$ for all $f\in C(X)$.

If $\mu$ were not a point mass, then $\supp(\mu)$ would contain two distinct points
$x_1\ne x_2$.
Choose disjoint open neighbourhoods $U,V\subseteq X$ of $x_1$ and $x_2$.
Since $x_1,x_2\in\supp(\mu)$ we have $\mu(U)>0$ and $\mu(V)>0$.
By inner regularity of $\mu$ there exist compact sets $K\subseteq U$ and $L\subseteq V$
with $\mu(K)>0$ and $\mu(L)>0$.
By Urysohn's lemma, choose $f,g\in C(X)_+$ such that
$f\le \mathbf{1}$, $g\le \mathbf{1}$,
$f|_{K}\equiv 1$, $g|_{L}\equiv 1$,
and $\supp(f)\subseteq U$, $\supp(g)\subseteq V$.
Then $f\wedge g=0$, whence
\[
0=\psi(f\wedge g)=\psi(f)\wedge \psi(g)
=\min\bigl\{\psi(f),\,\psi(g)\bigr\}.
\]
On the other hand,
\[
\psi(f)=\int_X f\,d\mu\ge \mu(K)>0,
\qquad
\psi(g)=\int_X g\,d\mu\ge \mu(L)>0,
\]
a contradiction.
Therefore $\mu=\delta_x$ for a unique $x\in X$,
so $\varphi(f)=c\,f(x)$ for all $f\in C(X)$.
\end{proof}

The next lemma is also well known but we present it for the sake of completeness.

\begin{lemma}\label{lem:weighted-composition}
Let $X$ and $Y$ be compact Hausdorff spaces and let $T\colon C(X)\to C(Y)$ be a~lattice homomorphism.
Set $h:=T\mathbf{1}\in C(Y)_+$ and $U:=\{y\in Y: h(y)>0\}$.
Then there exists a~unique continuous map $r\colon U\to X$ such that
\begin{equation}\label{eq:wc}
(Tf)(y)=
\begin{cases}
h(y)\,f\bigl(r(y)\bigr) & \text{if } y\in U,\\[2pt]
0 & \text{if } y\in Y\setminus U,
\end{cases}
\end{equation}
for all $f\in C(X)$.
\end{lemma}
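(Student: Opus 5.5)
The plan is to work pointwise, composing $T$ with the point evaluations on $Y$ and invoking Lemma~\ref{lem:eval}. For each $y\in Y$ the functional $\delta_y\circ T\colon C(X)\to\mathbb R$ is a lattice homomorphism, since $\delta_y$ is one.

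The proof then proceeds in the following steps.

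\emph{Case $h(y)=0$.} For every $f\in C(X)$ we have $|f|\le \|f\|_\infty\mathbf 1$. Positivity of $T$ then gives
\[
|(Tf)(y)|=(T|f|)(y)\le \|f\|_\infty h(y)=0 .
\]
Hence $Tf$ vanishes off $U$, which is the second line of \eqref{eq:wc}.

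\emph{Case $y\in U$.} Here $\delta_y\circ T$ is non-zero, because it takes the value $h(y)>0$ at $\mathbf 1$. Lemma~\ref{lem:eval} therefore yields a unique $r(y)\in X$ and a constant $c_y>0$ with $(Tf)(y)=c_y f(r(y))$. Evaluating at $f=\mathbf 1$ forces $c_y=h(y)$, and this gives the first line of \eqref{eq:wc}.

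\emph{Uniqueness.} The point $r(y)$ is determined by the uniqueness clause in Lemma~\ref{lem:eval}. Alternatively, if two maps $r,r'$ both satisfy \eqref{eq:wc} and $r(y)\neq r'(y)$, then any Urysohn function separating $r(y)$ from $r'(y)$ contradicts \eqref{eq:wc} at $y$.

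\emph{Continuity of $r$.} This is the only step requiring a small argument, and it is the main (mild) obstacle. Suppose $y_\alpha\to y$ is a net in $U$ with $y\in U$, but $r(y_\alpha)\not\to r(y)$. By compactness of $X$, pass to a subnet with $r(y_\alpha)\to x\neq r(y)$. Choose $f\in C(X)$ with $f(x)=1$ and $f(r(y))=0$.

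The quotient $Tf/h$ is continuous on the open set $U$, and on $U$ it equals $f\circ r$. Since $h(y)>0$,
\[
f(r(y_\alpha))=\frac{(Tf)(y_\alpha)}{h(y_\alpha)}\longrightarrow \frac{(Tf)(y)}{h(y)}=f(r(y))=0 .
\]
On the other hand, $f(r(y_\alpha))\to f(x)=1$, which is a contradiction.

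A cleaner formulation of the same step is that $f\circ r=Tf/h$ is continuous on $U$ for every $f\in C(X)$. Since $X$ is compact Hausdorff, it carries the weak topology induced by $C(X)$, so $r$ itself is continuous.
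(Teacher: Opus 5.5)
Your proof is correct and follows essentially the same route as the paper. Both compose $T$ with the point evaluations, invoke Lemma~\ref{lem:eval} to identify the weight as $h(y)$ and the point $r(y)$, and get continuity of $r$ from continuity of $Tf/h$ on $U$ via a subnet argument; your contradiction phrasing, and the remark about the weak topology induced by $C(X)$, are only cosmetic variants of the paper's unique-cluster-point argument.
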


\begin{proof}
For each $y\in Y$ define $\varphi_y\colon C(X)\to\mathbb R$ by $\varphi_y(f):=(Tf)(y)$.
Then $\varphi_y$ is a lattice homomorphism.
If $h(y)=0$, then for every $f\in C(X)_+$ we have
$0\le \varphi_y(f)\le \|f\|_\infty\,\varphi_y(\mathbf{1})=0$,
so $\varphi_y(f)=0$.
By linearity it follows that $\varphi_y=0$, hence $(Tf)(y)=0$ for all~$f$.
If $y\in U$, then $\varphi_y\ne 0$ and Lemma~\ref{lem:eval} provides
unique $c(y)>0$ and $r(y)\in X$ with $\varphi_y(f)=c(y)\,f(r(y))$.
Evaluating at $f=\mathbf{1}$ gives $c(y)=h(y)$, establishing~\eqref{eq:wc}.
Uniqueness of $r$ follows from Lemma~\ref{lem:eval}.

It remains to verify that $r\colon U\to X$ is continuous.
Suppose $(y_\alpha)$ is a net in $U$ converging to some $y\in U$
with $r(y_\alpha)\to x\in X$ along a subnet (compactness of~$X$ guarantees such subnets exist).
For any $f\in C(X)$, continuity of $Tf$ and $h$ yields
\[
f(x)
=\lim_\alpha f\bigl(r(y_\alpha)\bigr)
=\lim_\alpha \frac{(Tf)(y_\alpha)}{h(y_\alpha)}
=\frac{(Tf)(y)}{h(y)}
=f\bigl(r(y)\bigr).
\]
Since $f$ was arbitrary, $x=r(y)$.
Thus every convergent subnet of $(r(y_\alpha))$ has limit $r(y)$.
In particular, $(r(y_\alpha))$ has the unique cluster point $r(y)$.
Since $X$ is compact Hausdorff, this implies $r(y_\alpha)\to r(y)$,
and $r$ is continuous.
\end{proof}

\subsection{Proof of Theorem~\ref{thm:A}}

\begin{proof}[Proof of Theorem~\ref{thm:A}]
As noted in the introduction, if $X$ is an ANR then $C(X)$ is $1^+$-projective
by~\cite[Theorem~1.4]{avilaes2020}, hence semiprojective.
We prove the converse.

Suppose that $X$ is a compact metric space which is not an ANR.
By the classical theory of retracts (see, \emph{e.g.},~\cite[Chapter~IV, \S3]{borsuk}),
there exist a compact metric space~$Y$ and an embedding $X\hookrightarrow Y$ such that
$X$ is not a neighbourhood retract of~$Y$
(one may take $Y$ to be a suitable compact subset of the Hilbert cube).
Fix a compatible metric $d$ on~$Y$ and set
\[
Y_n:=\bigl\{y\in Y: d(y,X)\le 1/n\bigr\}\qquad (n\in\mathbb N).
\]
Then $Y_1\supseteq Y_2\supseteq\cdots\supseteq X$, each $Y_n$ is compact,
and $\bigcap_n Y_n=X$.
Since $X$ is not a neighbourhood retract of~$Y$,
there is no retraction from any $Y_n$ onto~$X$:
indeed, if $r_n\colon Y_n\to X$ were a retraction, then its restriction to
$W_n:=\{y\in Y:d(y,X)<1/n\}$ would be a retraction from an \emph{open} neighbourhood
$W_n$ of $X$ in $Y$, contradicting the choice of the embedding $X\hookrightarrow Y$.

Define $B:=C(Y)$, $I:=I_X$, and $I_n:=I_{Y_n}$.
Since $Y$ is compact metrisable, the Banach lattice $B=C(Y)$ is separable.
Lemma~\ref{lem:limit-ideal} gives $I=\overline{\bigcup_{n} I_n}$,
so $I$ is an inductive limit ideal in~$B$.
The canonical identifications are
\begin{equation}\label{eq:identifications}
B/I\cong C(X)\qquad\text{and}\qquad B/I_n\cong C(Y_n).
\end{equation}
Under these identifications, the canonical surjection
$\pi_n\colon B/I_n\to B/I$ corresponds to the restriction operator
$\rho\colon C(Y_n)\to C(X)$, $\rho(g)=g|_X$.

Assume, towards a contradiction, that $C(X)$ is semiprojective.
Let $\phi\colon C(X)\to B/I$ be the canonical isometric lattice isomorphism.
By Definition~\ref{def:semi} there exists $n\in\mathbb N$ such that for every $\varepsilon>0$
there is a lattice homomorphism $\psi_\varepsilon\colon C(X)\to B/I_n$
with $\|\psi_\varepsilon\|\le 1+\varepsilon$ and $\pi_n\circ \psi_\varepsilon=\phi$.
Via the identifications~\eqref{eq:identifications},
this yields for each $\varepsilon>0$ a lattice homomorphism
$T_\varepsilon\colon C(X)\to C(Y_n)$ satisfying
\begin{equation}\label{eq:section}
\rho\circ T_\varepsilon=\id_{C(X)}.
\end{equation}

Fix any such map $T:=T_\varepsilon$.
Apply Lemma~\ref{lem:weighted-composition} to~$T$: we obtain
an open set $U\subseteq Y_n$,
a~continuous weight $h=T\mathbf{1}\in C(Y_n)_+$,
and a continuous map $r\colon U\to X$ such that
$(Tf)(y)=h(y)\,f(r(y))$ for all $f\in C(X)$ and $y\in U$.

We claim that $r$ restricts to the identity on~$X$, so $r\colon U\to X$ is a retraction
from a neighbourhood of~$X$ in~$Y$ onto~$X$.
Indeed, by~\eqref{eq:section} we have $(T\mathbf{1})|_X=\mathbf{1}$, so
$h(x)=1$ for every $x\in X$.
In particular, $X\subseteq U=\{y\in Y_n:h(y)>0\}$.
For $x\in X$ and arbitrary $f\in C(X)$, \eqref{eq:section} gives
\[
f(x)=(Tf)(x)=h(x)\,f\bigl(r(x)\bigr)=f\bigl(r(x)\bigr).
\]
Separating points of~$X$ by continuous functions forces $r(x)=x$.
Set $W_n:=\{y\in Y: d(y,X)<1/n\}$.
Then $W_n$ is an \emph{open} neighbourhood of $X$ in $Y$ and $W_n\subseteq Y_n$.
Since $U$ is open in the subspace~$Y_n$, the intersection $U_0:=U\cap W_n$
is open in~$Y$, contains~$X$, and satisfies $U_0\subseteq U$.
Therefore $r|_{U_0}\colon U_0\to X$ is a continuous retraction from an open neighbourhood
of $X$ in $Y$ onto $X$, contradicting the choice of the embedding $X\hookrightarrow Y$.
\end{proof}

\section{Proof of Theorem~\ref{thm:B} and applications}\label{sec:thmB}

Having established a topological characterisation for $C(X)$-spaces,
it is natural to ask whether semiprojectivity and $1^+$-projectivity coincide
for general Banach lattices.
We recall the definition of the stronger property.

\begin{definition}[{\cite[Definition~10.1]{dpw}}]\label{def:projective}
A Banach lattice $P$ is \emph{$1^+$-projective} if for every Banach lattice~$X$,
every closed ideal $J\subseteq X$,
every lattice homomorphism $T\colon P\to X/J$, and every $\varepsilon>0$,
there exists a lattice homomorphism $\widehat T\colon P\to X$ with
$Q\circ \widehat T=T$ and $\|\widehat T\|\le (1+\varepsilon)\|T\|$,
where $Q\colon X\to X/J$ denotes the quotient map.
\end{definition}

\begin{proof}[Proof of Theorem~\ref{thm:B}]
If $\Gamma$ is countable, the $1^+$-projectivity of~$P$ follows from
de~Pagter and Wickstead~\cite[Theorem~11.8]{dpw}.

We now prove semiprojectivity for arbitrary $\Gamma$.
We begin with a general observation: every disjoint family of non-zero positive
elements in a separable Banach lattice is countable.
Indeed, let $E$ be a separable Banach lattice and let
$\{u_\gamma\}_{\gamma\in\Gamma}\subseteq E_+$
be a family of pairwise disjoint non-zero elements.
For each $m\in\mathbb N$, the set $\Gamma_m:=\{\gamma\in\Gamma: \|u_\gamma\|\ge 1/m\}$
is $1/m$-separated in~$E$:
if $\gamma\ne \gamma'$ lie in~$\Gamma_m$, then $u_\gamma\perp u_{\gamma'}$ gives
$|u_\gamma-u_{\gamma'}|=u_\gamma+u_{\gamma'}$, so
$\|u_\gamma-u_{\gamma'}\|=\|u_\gamma+u_{\gamma'}\|\ge \|u_\gamma\|\ge 1/m$.
Separability forces each $\Gamma_m$ to be countable,
and since $\Gamma=\bigcup_{m}\Gamma_m$, the claim follows.

Now let $B$ be a separable Banach lattice,
$I=\overline{\bigcup_n I_n}\subseteq B$ an inductive limit ideal,
and $\phi\colon P\to B/I$ a contractive lattice homomorphism.
For each $\gamma\in\Gamma$ let $\iota_\gamma\colon P_\gamma\to P$ denote the canonical coordinate
embedding, and fix a topological order unit $e_\gamma\in (P_\gamma)_+$.
Set
\[
v_\gamma:=\iota_\gamma(e_\gamma)\in P_+,
\qquad
u_\gamma:=\phi(v_\gamma)\in (B/I)_+.
\]
Since $v_\gamma\perp v_{\gamma'}$ in $P$ for $\gamma\neq\gamma'$ and $\phi$ is a lattice
homomorphism, the family $(u_\gamma)_{\gamma\in\Gamma}$ is pairwise disjoint in $B/I$:
\[
|u_\gamma|\wedge |u_{\gamma'}|
=|\phi(v_\gamma)|\wedge |\phi(v_{\gamma'})|
=\phi(v_\gamma\wedge v_{\gamma'})=0
\qquad(\gamma\neq\gamma').
\]
The quotient $B/I$, equipped with the quotient norm, is separable as a continuous image of~$B$,
so the set $J:=\{\gamma\in\Gamma: u_\gamma\ne 0\}$ is countable
by the observation above.

Now fix $\gamma\notin J$, so $u_\gamma=\phi(\iota_\gamma(e_\gamma))=0$.
Since $\ker\phi$ is a closed ideal in $P$ and contains $\iota_\gamma(e_\gamma)$,
it contains every $z\in P$ satisfying $|z|\le c\,\iota_\gamma(e_\gamma)$ for some $c>0$.
These are precisely the vectors of the form $\iota_\gamma(x)$ with
$|x|\le c\,e_\gamma$.
Hence
\[
\iota_\gamma(I(e_\gamma))\subseteq \ker\phi,
\qquad
I(e_\gamma):=\{x\in P_\gamma: |x|\le c\,e_\gamma \text{ for some } c>0\}.
\]
Because $e_\gamma$ is a topological order unit, $\overline{I(e_\gamma)}=P_\gamma$.
Since $\ker\phi$ is closed, we conclude that
\[
\iota_\gamma(P_\gamma)=\overline{\iota_\gamma(I(e_\gamma))}\subseteq \ker\phi.
\]
Consequently, $\phi(\iota_\gamma(x))=0$ for all $x\in P_\gamma$.

Let $P_J:=\bigl(\bigoplus_{\gamma\in J} P_\gamma\bigr)_{\ell_1(J)}$ and let $\pi_J\colon P\to P_J$
denote the coordinate projection (a contractive lattice homomorphism).
The previous paragraph shows that $\phi$ factors as
\[
P \xrightarrow{\;\pi_J\;} P_J \xrightarrow{\;\phi_0\;} B/I
\]
for a contractive lattice homomorphism~$\phi_0$.

Since $J$ is countable, the first paragraph of this proof gives that $P_J$ is
$1^+$-projective.
Let $Q\colon B\to B/I$ denote the quotient map.
For any $\varepsilon>0$, the $1^+$-projectivity of~$P_J$ furnishes
a lattice homomorphism $\widehat\phi_\varepsilon\colon P_J\to B$ satisfying
$Q\circ \widehat\phi_\varepsilon=\phi_0$ and
$\|\widehat\phi_\varepsilon\|\le 1+\varepsilon$.
Fix $n=1$ and let $Q_1\colon B\to B/I_1$ be the quotient map.
Define $\psi_\varepsilon:=Q_1\circ \widehat\phi_\varepsilon\circ \pi_J\colon P\to B/I_1$.
Then $\psi_\varepsilon$ is a lattice homomorphism with
$\|\psi_\varepsilon\|\le 1+\varepsilon$, and
\[
\pi_1\circ \psi_\varepsilon
=\pi_1\circ Q_1\circ \widehat\phi_\varepsilon\circ \pi_J
=Q\circ \widehat\phi_\varepsilon\circ \pi_J
=\phi_0\circ \pi_J
=\phi.
\]
Since the index $n=1$ does not depend on~$\varepsilon$,
$P$ is semiprojective.
\end{proof}

\begin{proof}[Proof of Corollary~\ref{cor:l1-Gamma-intro}]
The space $\ell_1(\Gamma)=\bigl(\bigoplus_{\gamma\in\Gamma}\mathbb R\bigr)_{\ell_1(\Gamma)}$
is a special case of Theorem~\ref{thm:B}
(each summand $\mathbb R$ is $1^+$-projective by~\cite[Corollary~10.6]{dpw},
with $e_\gamma=1$ as topological order unit).
If $\Gamma$ is countable, $1^+$-projectivity follows from Theorem~\ref{thm:B}.
If $\Gamma$ is uncountable, then $\ell_1(\Gamma)$ contains the uncountable
disjoint family $\{e_\gamma\}_{\gamma\in\Gamma}$;
by~\cite[Corollary~10.5]{dpw}, every $1^+$-projective Banach lattice has
only countably many pairwise disjoint non-zero positive elements,
so $\ell_1(\Gamma)$ is not $1^+$-projective.
\end{proof}

\begin{corollary}\label{cor:l1-sum-CK-anr}
Let\/ $\Gamma$ be a set and let\/ $(K_\gamma)_{\gamma\in\Gamma}$ be a family
of compact metric ANR's.
Then $\bigl(\bigoplus_{\gamma\in\Gamma} C(K_\gamma)\bigr)_{\ell_1(\Gamma)}$ is semiprojective.
If\/ $\Gamma$ is countable, then it is $1^+$-projective.
\end{corollary}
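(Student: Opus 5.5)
The plan is to obtain Corollary~\ref{cor:l1-sum-CK-anr} directly from Theorem~\ref{thm:B} with $P_\gamma:=C(K_\gamma)$. For this we only need to check the two hypotheses of Theorem~\ref{thm:B} for each summand.

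\textbf{Step 1: $1^+$-projectivity of the summands.} Since each $K_\gamma$ is a compact metric ANR, \cite[Theorem~1.4]{avilaes2020} shows that $C(K_\gamma)$ is $1^+$-projective. This is the same fact already used in the ``if'' direction of Theorem~\ref{thm:A}.

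\textbf{Step 2: topological order units.} Each $C(K_\gamma)$ has the topological order unit $e_\gamma:=\mathbf{1}_{K_\gamma}$, as noted right after Theorem~\ref{thm:B}. Indeed, $|f|\le\|f\|_\infty\mathbf 1_{K_\gamma}$ for every $f$, so the ideal generated by $\mathbf 1_{K_\gamma}$ is already all of $C(K_\gamma)$ and no closure is needed.

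\textbf{Step 3: conclusion.} Theorem~\ref{thm:B} now gives semiprojectivity of the $\ell_1(\Gamma)$-sum for arbitrary $\Gamma$, and $1^+$-projectivity when $\Gamma$ is countable.

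There is essentially no obstacle here. The only point to watch is that Theorem~\ref{thm:B} places no separability or cardinality restriction on the summands, so the empty or singleton cases and arbitrary index sets are all covered.
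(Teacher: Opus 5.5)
Your proposal is correct and matches the paper's proof: both take $P_\gamma=C(K_\gamma)$ in Theorem~\ref{thm:B}, cite \cite[Theorem~1.4]{avilaes2020} for $1^+$-projectivity of each summand, and use $\mathbf{1}_{K_\gamma}$ as the topological order unit. Your added remark that $|f|\le\|f\|_\infty\mathbf{1}_{K_\gamma}$ makes the generated ideal already all of $C(K_\gamma)$, with no closure needed, is accurate.
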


\begin{proof}
Each $C(K_\gamma)$ is $1^+$-projective by
Avil\'es--Mart\'{\i}nez-Cervantes--Rodr\'{\i}guez Abell\'an~\cite[Theorem~1.4]{avilaes2020},
with $\mathbf{1}_{K_\gamma}$ as topological order unit.
The result follows from Theorem~\ref{thm:B}.
\end{proof}

\begin{remark}\label{rem:l1-sum-why-special}
Theorem~\ref{thm:B} uses two ingredients that go beyond semiprojectivity
of the summands.
\emph{(1)~Order units detect support.}
The reduction to countably many coordinates relies on the existence, in each summand~$P_\gamma$,
of a topological order unit~$e_\gamma$ whose generated closed ideal is all of~$P_\gamma$.
If $\phi(\iota_\gamma(e_\gamma))=0$, then the closed-ideal property forces
$\phi$ to vanish on the entire coordinate $\iota_\gamma(P_\gamma)$.
\emph{(2)~Countable $\ell_1$-sums of $1^+$-projectives are $1^+$-projective.}
After the support reduction, one still needs a strong permanence theorem to lift the
resulting map from the countable subsum into~$B$ with uniform norm control,
provided by~\cite[Theorem~11.8]{dpw} for $1^+$-projective summands.
If one replaces the summands by merely \emph{semiprojective} Banach lattices,
neither ingredient is guaranteed:
semiprojectivity alone does not provide a canonical order unit,
and there is currently no analogue of~\cite[Theorem~11.8]{dpw} for semiprojective summands.
Concretely, semiprojectivity of~$A_\gamma$ yields lifts only at some stage $B/I_{n(\gamma)}$
(which may depend on~$\gamma$), and there is no reason for the indices $n(\gamma)$ to be
bounded when infinitely many coordinates are involved;
cf.\ Question~\ref{q:permanence}.
\end{remark}

\section{Concluding remarks and open questions}\label{sec:questions}

Theorem~\ref{thm:A} shows that semiprojectivity of $C(X)$ in the Banach-lattice category
captures the ANR property of~$X$ without the dimension-one constraint
that is present in the $C^*$-algebraic setting~\cite{sorensen-thiel}.
We close with several questions that arise naturally from this work.

\subsection*{$\ell$-open and $\ell$-closed Banach lattices}

Motivated by work of Oyetunbi and Tikuisis~\cite{oyetunbi-tikuisis}
on $\ell$-open and $\ell$-closed $C^*$-algebras,
one can introduce analogous notions in the Banach-lattice setting.

Fix Banach lattices $A,B$ and a closed ideal $I\subseteq B$.
A \emph{contractive} lattice homomorphism $\phi\colon A\to B/I$ is called
\emph{$1^+$-liftable} if for every $\varepsilon>0$ there exists a lattice homomorphism
$\widehat\phi_\varepsilon\colon A\to B$ such that $Q\circ \widehat\phi_\varepsilon=\phi$ and
$\|\widehat\phi_\varepsilon\|\le 1+\varepsilon$, where $Q\colon B\to B/I$ is the quotient map.

Let
\[
\Hom_1(A,B/I):=\bigl\{\phi\in\Hom(A,B/I): \|\phi\|\le 1\bigr\},
\]
endowed with the point-norm topology.
We say that $A$ is \emph{$\ell$-open} (respectively, \emph{$\ell$-closed}) if for every
separable Banach lattice $B$ and every closed ideal $I\subseteq B$, the set of
$1^+$-liftable elements of $\Hom_1(A,B/I)$ is open
(respectively, closed) in $\Hom_1(A,B/I)$.

Clearly, every $1^+$-projective Banach lattice is both $\ell$-open and $\ell$-closed.
In particular, if $X$ is a compact metric ANR, then $C(X)$ is $\ell$-open and $\ell$-closed.

In the \(C^*\)-algebraic setting, Oyetunbi and Tikuisis prove that a commutative unital
\(C^*\)-algebra is \(\ell\)-open if and only if it is semiprojective \cite{oyetunbi-tikuisis}.
Thus Question~\ref{q:l-open} asks whether the corresponding equivalence persists
in the Banach-lattice category. This problem appears more challenging than its $C^*-$algebraic counterpart: since lattice homomorphisms are not automatically contractive, techniques for transferring lifting properties must maintain strict norm control that could be easily destroyed under perturbation.

\begin{question}\label{q:l-open}
Do semiprojective Banach lattices have to be $\ell$-open?
Conversely, does $\ell$-openness imply semiprojectivity?
\end{question}

\subsection*{Further questions}

\begin{question}\label{q:separable-equivalence}
Let $A$ be a \emph{separable} Banach lattice.
Does semiprojectivity of $A$ imply that $A$ is $1^+$-projective?
Equivalently, do semiprojectivity and $1^+$-projectivity coincide for separable Banach lattices?
\end{question}

\begin{remark}
Corollary~\ref{cor:l1-Gamma-intro} separates semiprojectivity from $1^+$-projectivity using a nonseparable domain.
Question~\ref{q:separable-equivalence} asks whether nonseparability is essential for such a separation.
\end{remark}

\begin{question}\label{q:inductive-limit}
Is every separable Banach lattice an inductive limit of semiprojective Banach lattices?
\end{question}

This is the Banach-lattice counterpart of a question of
Blackadar~\cite[Section~4]{blackadar85} for $C^*$-algebras
(see also~\cite{thiel-inductive}).
In the commutative $C^*$-setting the answer is affirmative, since every compact metric space
is an inverse limit of ANR's,
and Thiel~\cite{thiel-inductive} established substantial closure properties
for the class of $C^*$-algebras that are inductive limits of semiprojective
$C^*$-algebras (for instance, closure under shape domination and homotopy equivalence).
It would be interesting to investigate whether analogous closure properties
hold in the Banach-lattice category,
and whether there is a shape-theoretic framework for Banach lattices
in which such questions can be studied systematically.

\begin{proposition}[Lifting through quotients of separable Banach lattices]\label{prop:sep-quotient-lift}
Let $A$ be a semiprojective Banach lattice, let $X$ be a separable Banach lattice,
let $J\subseteq X$ be a closed ideal, and let
$T\colon A\to X/J$ be a contractive lattice homomorphism.
Then there exists a lattice homomorphism $\widehat T\colon A\to X$ such that
$Q\circ \widehat T=T$, where $Q\colon X\to X/J$ is the quotient map.
\end{proposition}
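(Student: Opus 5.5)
Since $X$ is separable, Remark~\ref{rem:countable-exhaustion} lets us write $J$ as an inductive limit ideal. We cannot simply use that exhaustion, however: semiprojectivity only gives a lift to $X/J_n$, not to $X$ itself. The trick, as in the $C^*$-setting, is to choose the exhaustion so that the first stage is trivial. Set $J_1:=\{0\}$, and for $n\ge 2$ let $J_n$ be the closed ideal generated by $x_1,\dots,x_{n-1}$, where $(x_k)$ is dense in $J$. Then $J_1\subseteq J_2\subseteq\cdots$ and $J=\overline{\bigcup_n J_n}$. Semiprojectivity, applied with $B=X$ and $\varepsilon=1$, yields some $n$ and a lattice homomorphism $\psi\colon A\to X/J_n$ with $\pi_n\circ\psi=T$. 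The catch is that $n$ is not under our control, so we need an exhaustion for which every stage, or at least the stage that semiprojectivity picks, is harmless.

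To force this, we replace $X$ by a path-space (mapping telescope) lattice that makes every stage equivalent to the top. Concretely, let
\[
B:=\{f\in C([0,1],X): f(t)\in J \text{ for } t>0\}.
\]
The separability of $B$ needs checking, since $C([0,1],X)$ is separable when $X$ is. Put
\[
I:=\{f\in B: f(0)\in J\}, \qquad I_n:=\{f\in B: f(0)\in J,\ f|_{[0,1/n]}\equiv 0\}.
\]
With these choices we have $B/I\cong X/J$ via $f\mapsto Q(f(0))$. For $f\in I_n$ we get $f(0)=0$, so the map $f\mapsto f(0)$ descends to a lattice homomorphism $\mathrm{ev}_0\colon B/I_n\to X$ whose composition with $Q$ is the canonical surjection onto $B/I$. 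In fact it is cleaner to make $I$ consist of all $f$ with $f(0)\in J$, and then to check $I=\overline{\bigcup I_n}$ with a cutoff argument in $t$, as in Lemma~\ref{lem:limit-ideal}: multiply by $u_n(t)$, which vanishes on $[0,1/n]$, and use continuity at $0$ together with $f(0)\in J$.

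This step needs a small correction before it works. The map $f\mapsto f(0)$ does not kill $I_n$ unless $I_n$ forces $f(0)=0$, and we need $f(0)\in J$ to be approximable. So we take $I$ to be the functions with $f(0)\in J$, and $I_n$ the functions vanishing on $[0,1/n]$. We then approximate $f\in I$ by $u_n f+(1-u_n)g$, where $g(t)$ interpolates from $f(0)$ to $0$ inside $J$. Composing the lift $\psi\colon A\to B/I_n$ with $\mathrm{ev}_0$ gives $\widehat T:=\mathrm{ev}_0\circ\psi\colon A\to X$, a lattice homomorphism with $Q\circ\widehat T=T$. Alternatively, the cone of $J$ can be replaced by the simpler construction $B=X$, $I_n$ as above starting from $\{0\}$, but that only helps if $n=1$.

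\textbf{Main obstacle.} The hardest step is designing $B$, $I$ and $I_n$ so that three things hold at once: $I$ is exactly the closure of $\bigcup_n I_n$, $B/I\cong X/J$ isometrically as lattices, and every $B/I_n$ admits a lattice homomorphism to $X$ compatible with $Q$. I would try $B:=\{f\in C([0,1],X): f(t)-f(0)\in J\ \forall t\}$, with $I:=\{f\in B: f(0)\in J\}$ and $I_n:=\{f\in I: f|_{[0,1/n]}\equiv0\}$. Here evaluation at $1$, rather than at $0$, kills nothing, so it is not the right map. Instead I would use the evaluation at $0$ of the canonical representative, which is well defined modulo $I_n$ because $f\in I_n$ has $f(0)=0$. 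I would verify that this is a lattice homomorphism, since evaluation and quotients are, and would check the closure identity carefully.
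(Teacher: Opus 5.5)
There is a genuine gap. None of your candidate constructions meets the three requirements you list, and the one you settle on cannot be repaired.

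In your final choice $B=\{f\in C([0,1],X): f(t)-f(0)\in J\ \forall t\}$ one has $I=\{f\in B: f(0)\in J\}=C([0,1],J)$. Every $I_n$, however, lies in the closed set $\{f: f(0)=0\}$. Hence $\overline{\bigcup_n I_n}\subseteq\{f\in C([0,1],J): f(0)=0\}\subsetneq I$ as soon as $J\neq\{0\}$, so the closure identity fails. Your intermediate attempts have the same defect:
\begin{itemize}
\item the approximant $u_nf+(1-u_n)g$ takes the value $f(0)$ at $t=0$, so it is not in any $I_n$;
\item the space $\{f: f(t)\in J \text{ for } t>0\}$ is just $C([0,1],J)$ by continuity, so $B/I=0$ there.
\end{itemize}

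More fundamentally, the strategy itself is doomed. Suppose a single map $\mathrm{ev}_0$ vanishes on every $I_n$ and satisfies $Q\circ\mathrm{ev}_0=\widetilde\Pi\circ\pi_n$. By continuity it vanishes on $\overline{\bigcup_n I_n}=I$. It therefore factors through $B/I\cong X/J$ and gives a lattice-homomorphism section of $Q$ itself. Such a section does not exist in general. For example, take the restriction $C([0,1])\to C(K)$ with $K$ the Cantor set: by Lemma~\ref{lem:weighted-composition}, a section would give a retraction of a neighbourhood of $K$ in $[0,1]$ onto $K$, which is impossible.

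The missing idea is that the map from $B/I_n$ back to $X$ must depend on $n$. The path space must then be a cone that makes these stage-dependent evaluations compatible with $Q$. The paper takes
\[
B=\{f\in C([0,1],X): f(0)=0,\ Q(f(t))=t\,Q(f(1))\ \forall t\},
\qquad \Pi(f)=Q(f(1)).
\]
This has the following properties.
\begin{itemize}
\item $\ker\Pi=I=\{f\in C([0,1],J): f(0)=0\}$, and this really is the closure of $I_n=\{f\in I: f|_{[0,1/n]}=0\}$. The cutoff at $0$ works precisely because $f(0)=0$.
\item $\widetilde\Pi\colon B/I\to X/J$ is isometric, using the paths $f_x(t)=tx$.
\item $E_n([f]):=n\,f(1/n)$ is a well-defined lattice homomorphism $B/I_n\to X$ with $Q(E_n[f])=n\cdot\frac1n\,Q(f(1))=\widetilde\Pi(\pi_n[f])$.
\end{itemize}
Note that $E_n$ does not vanish on $I$; that is exactly how it escapes the obstruction above. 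The lift is then $\widehat T=E_n\circ\psi$.
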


\begin{proof}
Consider
\[
B:=\Bigl\{f\in C([0,1],X): f(0)=0
\text{ and } Q(f(t))=t\,Q(f(1)) \text{ for all } t\in[0,1]\Bigr\},
\]
equipped with the supremum norm and the pointwise order.
Then \(B\) is a closed sublattice of \(C([0,1],X)\): closedness follows because the
defining conditions are preserved under uniform limits, and if \(f\in B\), then
\[
Q(|f(t)|)=|Q(f(t))|=t\,|Q(f(1))|=t\,Q(|f(1)|)
\qquad (t\in[0,1]),
\]
so \(|f|\in B\).

Since $X$ is separable and $[0,1]$ is compact metric, $C([0,1],X)$ is separable,
hence so is $B$.

Define
\[
\Pi\colon B\to X/J,\qquad \Pi(f):=Q(f(1)).
\]
Then $\Pi$ is a contractive lattice homomorphism.
It is surjective: given $y\in X/J$, choose $x\in X$ with $Qx=y$ and set
$f_x(t):=tx$; then $f_x\in B$ and $\Pi(f_x)=y$.

Its kernel is
\[
I=\{f\in C([0,1],J): f(0)=0\}.
\]
For each $n\in\mathbb N$, let
\[
I_n:=\{f\in I: f|_{[0,1/n]}=0\}.
\]
Then each \(I_n\) is a closed ideal in \(B\), the sequence \((I_n)\) is increasing,
and \(I=\overline{\bigcup_n I_n}\). Indeed, closedness and monotonicity are clear. If
\(g\in I_n\) and \(f\in B\) satisfy \(|f|\le |g|\), then \(f(t)=0\) for \(t\in[0,1/n]\),
and since \(g(t)\in J\) and \(J\) is an ideal in \(X\), also \(f(t)\in J\) for every
\(t\in[0,1]\); hence \(f\in I_n\).
Indeed, if $f\in I$, choose $u_n\in C([0,1])$ with
$0\le u_n\le 1$, $u_n|_{[0,1/n]}=0$, and $u_n|_{[2/n,1]}=1$.
Then $u_nf\in I_n$ and
\[
\|f-u_nf\|_\infty\le \sup_{0\le t\le 2/n}\|f(t)\|\xrightarrow[n\to\infty]{}0,
\]
because $f(0)=0$.

The map $\Pi$ induces a lattice isomorphism
\[
\widetilde\Pi\colon B/I\to X/J.
\]
Moreover, $\widetilde\Pi$ is isometric: contractivity is clear, while for
$y\in X/J$ and $\eta>0$ one may choose $x\in X$ with $Qx=y$ and
$\|x\|\le \|y\|+\eta$; then $f_x(t)=tx$ belongs to $B$ and
\[
\|\widetilde\Pi^{-1}(y)\|\le \|f_x\|_\infty=\|x\|\le \|y\|+\eta.
\]
Letting $\eta\downarrow 0$ yields $\|\widetilde\Pi^{-1}(y)\|\le \|y\|$.

Now set
\[
\phi:=\widetilde\Pi^{-1}\circ T\colon A\to B/I.
\]
Since $A$ is semiprojective and $B$ is separable, there exist $n\in\mathbb N$
and, for instance with $\varepsilon=1$, a lattice homomorphism
\[
\psi\colon A\to B/I_n
\]
such that
\[
\pi_n\circ \psi=\phi,
\]
where $\pi_n\colon B/I_n\to B/I$ is the canonical quotient map.

Define
\[
E_n\colon B/I_n\to X,\qquad E_n([f]):=n\,f(1/n).
\]
This is well-defined because every element of $I_n$ vanishes at $1/n$.
It is a lattice homomorphism, since point evaluation and multiplication by a
positive scalar preserve lattice operations.
For $[f]\in B/I_n$ we have
\[
Q(E_n([f]))
=n\,Q(f(1/n))
=n\cdot \frac1n\,Q(f(1))
=\widetilde\Pi(\pi_n([f])).
\]
Hence
\[
Q\circ E_n\circ \psi
=\widetilde\Pi\circ \pi_n\circ \psi
=\widetilde\Pi\circ \phi
=T.
\]
Therefore $\widehat T:=E_n\circ \psi$ is the desired lift.
\end{proof}

\begin{remark}\label{rem:bounded-homogeneity}
By positive homogeneity, Definition~\ref{def:semi} and
Proposition~\ref{prop:sep-quotient-lift} remain valid with
`contractive' replaced by `bounded':
if $\phi$ has norm $M>0$, apply the contractive statement to $M^{-1}\phi$
and then multiply the resulting lift by $M$.
\end{remark}

\begin{corollary}\label{cor:ell-p-not-semi}
For every $1<p<\infty$, the Banach lattice $\ell_p$ is not semiprojective.
\end{corollary}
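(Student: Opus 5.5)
Assume, towards a contradiction, that $\ell_p$ is semiprojective for some $1<p<\infty$. Proposition~\ref{prop:sep-quotient-lift} (with Remark~\ref{rem:bounded-homogeneity}) then gives an \emph{exact} lifting property: for every separable Banach lattice $X$, every closed ideal $J\subseteq X$ and every lattice homomorphism $T\colon \ell_p\to X/J$, there is a lattice homomorphism $\widehat T\colon \ell_p\to X$ with $Q\circ\widehat T=T$. No norm control is needed; a bounded lift suffices. So the plan is to exhibit a separable $X$, an ideal $J$ and a lattice homomorphism $T\colon\ell_p\to X/J$ that admits no lattice-homomorphic lift at all.

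For the test situation I would take $X=C(K)$ for a suitable compact metric space $K$, so that $X$ is separable. A natural choice is $K=[0,\infty]$ or the Cantor set, with $J=I_F$ for a closed $F\subseteq K$, giving $X/J\cong C(F)$. Alternatively one can use the free-lattice setting of Avil\'es, Mart\'{\i}nez-Cervantes and Rodr\'{\i}guez Abell\'an~\cite{avilaes-c0}. Their growth estimate says, roughly, that a lattice homomorphism from $\ell_p$ (or into it) that lifts the unit vector basis must have norms on finite blocks $\sum_{k\le N} e_k$ growing in a way incompatible with the $\ell_p$ norm $N^{1/p}$. Concretely, I would follow their $c_0$/$\ell_p$ non-projectivity argument: choose an embedding of $\ell_p$ into a quotient $C(K)/J$ in which the images of the disjoint vectors $e_k$ are disjoint in the quotient but cannot be lifted to disjoint elements of $C(K)$ with bounded norms. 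Any lift would send $e_k$ to pairwise disjoint $f_k\in C(K)$, and the estimate from \cite{avilaes-c0} forces $\sup_N\|\sum_{k\le N}f_k\|/N^{1/p}=\infty$, contradicting boundedness of $\widehat T$.

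The steps are therefore: (1) reduce semiprojectivity to exact liftability via Proposition~\ref{prop:sep-quotient-lift}; (2) recall from \cite{avilaes-c0} a separable $X$, ideal $J$ and lattice homomorphism $T\colon\ell_p\to X/J$ with no lattice-homomorphic lift, checking that their construction uses a separable lattice (or a separable sublattice containing everything relevant); (3) conclude. The main obstacle is step (2). The result in \cite{avilaes-c0} is phrased as failure of $\lambda$-projectivity for every $\lambda$, possibly with a nonseparable ambient lattice such as a free Banach lattice. I must check either that their counterexample lives in a separable lattice, or that it can be passed to the separable sublattice generated by countably many relevant elements without destroying the obstruction. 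Their estimate is a quantitative bound holding for every bounded lift, so any lift would have some finite norm $\lambda$ and be excluded; that is exactly what is needed.
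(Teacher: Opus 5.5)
\textbf{There is a genuine gap.} Your step (1) matches the paper's reduction. Via Proposition~\ref{prop:sep-quotient-lift} and Remark~\ref{rem:bounded-homogeneity}, semiprojectivity of $\ell_p$ would give a bounded lattice-homomorphic lift through every quotient of a separable Banach lattice. The problem is step (2), which carries the whole proof and which you only defer to \cite{avilaes-c0}. Worse, the concrete version you sketch cannot work.

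Take $X=C(K)$ with $K$ compact metric and $J=I_F$. Then \emph{every} lattice homomorphism $T\colon\ell_p\to C(F)$ lifts:
\begin{itemize}
\item $T$ is determined by the pairwise disjoint functions $f_n:=Te_n\in C(F)_+$, which satisfy $\|f_n\|_\infty\le\|T\|$.
\item The relatively open sets $V_n:=\{f_n>0\}$ have pairwise disjoint open enlargements $W_n:=\{t\in K: d(t,V_n)<d(t,F\setminus V_n)\}$ with $W_n\cap F=V_n$.
\item Tietze's theorem, applied to $f_n$ on $F$ and $0$ on $K\setminus W_n$, gives disjoint extensions $g_n$ with $\|g_n\|_\infty=\|f_n\|_\infty$.
\item Then $x\mapsto\sum_n x_ng_n$ is a lift of $T$.
\end{itemize}
More fundamentally, the mechanism you describe is impossible in any $C(K)$. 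Pairwise disjoint $f_k$ satisfy $\|\sum_{k\le N}f_k\|_\infty=\max_{k\le N}\|f_k\|_\infty$, which is at most $\|\widehat T\|$. So no estimate can force $\|\sum_{k\le N}f_k\|/N^{1/p}\to\infty$ there. The growth phenomenon in \cite{avilaes-c0} is specific to the free Banach lattice, whose norm on disjoint sums behaves nothing like a sup-norm.

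\textbf{What is missing is the actual test situation.} The paper proceeds as follows.
\begin{itemize}
\item It takes $L=\mathcal P_{\mathrm{fin}}(\mathbb N)\setminus\{\varnothing\}$. $FBL(L)$ is separable because $L$ is countable \cite[Theorem~8.4]{dpw}, which settles your separability worry.
\item It uses the surjective lattice homomorphism $\Phi\colon FBL(L)\to c_0$ of \cite[Lemma~2.2]{avilaes-c0}. The test map is $\widetilde\Phi^{-1}\circ J_p\colon\ell_p\to FBL(L)/\ker\Phi$, where $J_p\colon\ell_p\to c_0$ is the formal inclusion.
\item A lift $u$ satisfies $\Phi\circ u=J_p$. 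Hence $f_n:=u(e_n)$ satisfies $f_n(x_n^*)=1$ at $x_n^*:=(\chi_A(\{n\}))_{A\in L}$. Moreover, $x_n^*|_F=0$ whenever $F\subseteq L$ is finite and $n\notin\bigcup F$.
\item These are exactly the hypotheses of \cite[Lemma~2.3]{avilaes-c0}. It yields a subsequence with $\|\sum_{k\le m}f_{n_k}\|\ge m-1$.
\item On the other hand, $\|\sum_{k\le m}f_{n_k}\|=\|u(\sum_{k\le m}e_{n_k})\|\le\|u\|\,m^{1/p}$. This is where $p>1$ gives the contradiction.
\end{itemize}
Your proposal names the right source but supplies none of these ingredients: the ambient lattice, the test homomorphism, the check of the lemma's hypotheses, or the comparison of growth rates.
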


\begin{proof}
Let
\[
L:=\mathcal P_{\mathrm{fin}}(\mathbb N)\setminus\{\varnothing\},
\]
and let $FBL(L)$ denote the free Banach lattice generated by $L$ in the sense of
\cite{dpw}. Since $L$ is countable, $FBL(L)$ is separable by
\cite[Theorem~8.4]{dpw}.
By \cite[Lemma~2.2]{avilaes-c0}, there exists a surjective lattice homomorphism
\(
\Phi\colon FBL(L)\to c_0.
\)
Let
\(
J_p\colon \ell_p\to c_0
\) 
be the formal inclusion; this is a contractive lattice homomorphism.

Assume towards a contradiction that $\ell_p$ is semiprojective.
Let $Q\colon FBL(L)\to FBL(L)/\ker\Phi$ be the quotient map, and let
$\widetilde\Phi\colon FBL(L)/\ker\Phi\to c_0$
be the induced Banach lattice isomorphism.
Then $\widetilde\Phi^{-1}\circ J_p\colon \ell_p\to FBL(L)/\ker\Phi$
is a bounded lattice homomorphism.
By Remark~\ref{rem:bounded-homogeneity} and
Proposition~\ref{prop:sep-quotient-lift}, there exists a lattice homomorphism
$u\colon \ell_p\to FBL(L)$ such that
\[
Q\circ u=\widetilde\Phi^{-1}\circ J_p.
\]
Consequently,
\[
\Phi\circ u
=\widetilde\Phi\circ Q\circ u
=\widetilde\Phi\circ \widetilde\Phi^{-1}\circ J_p
=J_p.
\]
For each $n\in\mathbb N$, set
\[
f_n:=u(e_n)\in FBL(L)_+,
\]
where $(e_n)$ is the canonical basis of $\ell_p$.

For each $n\in\mathbb N$, define
\[
x_n^*:=\bigl(\chi_A(\{n\})\bigr)_{A\in L}\in[-1,1]^L,
\]
using the notation of \cite[Section~2]{avilaes-c0}.
Since
\[
\Phi(f_n)=J_p(e_n)=e_n,
\]
the definition of $\Phi$ gives
\[
f_n(x_n^*)=1
\qquad (n\in\mathbb N).
\]
Moreover, for every finite set $F\subseteq L$, one can choose
$n\notin \bigcup F$; then $x_n^*|_F=0$.
Therefore \cite[Lemma~2.3]{avilaes-c0} yields a subsequence
$(f_{n_k})_{k\ge 1}$ such that, for every $m\in\mathbb N$,
\[
\Bigl\|\sum_{k=1}^m f_{n_k}\Bigr\|\ge m-1.
\]
On the other hand,
\[
\Bigl\|\sum_{k=1}^m f_{n_k}\Bigr\|
=\Bigl\|u\Bigl(\sum_{k=1}^m e_{n_k}\Bigr)\Bigr\|
\le \|u\|\,\Bigl\|\sum_{k=1}^m e_{n_k}\Bigr\|_{\ell_p}
=\|u\|\,m^{1/p}.
\]
Since $p>1$, this is impossible for all $m$.
The contradiction shows that $\ell_p$ is not semiprojective.
\end{proof}

\begin{corollary}\label{cor:c0-not-semi}
The Banach lattice \(c_0\) is not semiprojective.
\end{corollary}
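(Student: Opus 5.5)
The plan is to run the argument of Corollary~\ref{cor:ell-p-not-semi} verbatim with $c_0$ in place of $\ell_p$. Nothing in that proof used $p>1$ except the final norm estimate, and $c_0$ grows even more slowly. Assume, towards a contradiction, that $c_0$ is semiprojective.

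First I set up the lifting problem. Let $L=\mathcal P_{\mathrm{fin}}(\mathbb N)\setminus\{\varnothing\}$; then $FBL(L)$ is separable by \cite[Theorem~8.4]{dpw}. Take the surjective lattice homomorphism $\Phi\colon FBL(L)\to c_0$ from \cite[Lemma~2.2]{avilaes-c0}, and let $\widetilde\Phi\colon FBL(L)/\ker\Phi\to c_0$ be the induced Banach lattice isomorphism. Then $\widetilde\Phi^{-1}\colon c_0\to FBL(L)/\ker\Phi$ is a bounded lattice homomorphism.

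Next I lift it. By Remark~\ref{rem:bounded-homogeneity} and Proposition~\ref{prop:sep-quotient-lift}, applied with $X=FBL(L)$, $J=\ker\Phi$ and $T=\widetilde\Phi^{-1}$, there is a lattice homomorphism $u\colon c_0\to FBL(L)$ with $Q\circ u=\widetilde\Phi^{-1}$. Equivalently, $\Phi\circ u=\id_{c_0}$.

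Now I extract the growth. Set $f_n:=u(e_n)\in FBL(L)_+$. Since $\Phi(f_n)=e_n$, the definition of $\Phi$ gives $f_n(x_n^*)=1$, where $x_n^*=(\chi_A(\{n\}))_{A\in L}$. For every finite $F\subseteq L$ one can pick $n\notin\bigcup F$, so that $x_n^*|_F=0$. Hence \cite[Lemma~2.3]{avilaes-c0} applies exactly as before and yields a subsequence $(f_{n_k})$ with
\[
\Bigl\|\sum_{k=1}^m f_{n_k}\Bigr\|\ge m-1\qquad(m\in\mathbb N).
\]

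Finally I compare with the trivial bound in $c_0$:
\[
\Bigl\|\sum_{k=1}^m f_{n_k}\Bigr\|\le\|u\|\,\Bigl\|\sum_{k=1}^m e_{n_k}\Bigr\|_\infty=\|u\|.
\]
This contradicts the lower bound $m-1$ once $m>\|u\|+1$.

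There is no real obstacle here. The only point to check is that the hypotheses of \cite[Lemma~2.3]{avilaes-c0} depend solely on the values $f_n(x_n^*)=1$ and the vanishing of $x_n^*$ on finite sets, and not on the domain of $u$. This is clear from the $\ell_p$ argument.

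As an alternative route, one could note that the formal inclusion $\ell_p\to c_0$ composed with a lift of $\id_{c_0}$ would give the lift needed in the $\ell_p$ case. However, the retract-style Proposition~\ref{prop:retract} does not directly apply, so the direct argument above is preferable.
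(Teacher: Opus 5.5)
Your proposal is correct and is essentially the paper's proof: you lift $\widetilde\Phi^{-1}$ via Remark~\ref{rem:bounded-homogeneity} and Proposition~\ref{prop:sep-quotient-lift} to get $u$ with $\Phi\circ u=\id_{c_0}$, apply \cite[Lemma~2.3]{avilaes-c0} (the paper takes $\varepsilon=1$) to $f_n=u(e_n)$, and contradict the lower bound $\|\sum_{k=1}^m f_{n_k}\|\ge m-1$ with the upper bound $\|u\|$. Incidentally, the alternative route you set aside also works without Proposition~\ref{prop:retract}: the proof of Corollary~\ref{cor:ell-p-not-semi} shows there is no lattice homomorphism $v\colon\ell_p\to FBL(L)$ with $\Phi\circ v=J_p$, yet $v:=u\circ J_p$ would be one.
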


\begin{proof}
Let
\[
L:=\mathcal P_{\mathrm{fin}}(\mathbb N)\setminus\{\varnothing\},
\]
and let \(FBL(L)\) denote the free Banach lattice generated by \(L\).
Since \(L\) is countable, \(FBL(L)\) is separable by \cite[Theorem~8.4]{dpw}.
By \cite[Lemma~2.2]{avilaes-c0}, there exists a surjective lattice homomorphism
\[
\Phi\colon FBL(L)\to c_0.
\]

Assume towards a contradiction that \(c_0\) is semiprojective.
Let \(Q\colon FBL(L)\to FBL(L)/\ker\Phi\) be the quotient map, and let
\(\widetilde\Phi\colon FBL(L)/\ker\Phi\to c_0\)
be the induced Banach lattice isomorphism.
By Remark~\ref{rem:bounded-homogeneity} and
Proposition~\ref{prop:sep-quotient-lift}, there exists a lattice homomorphism
\(u\colon c_0\to FBL(L)\) such that
\[
Q\circ u=\widetilde\Phi^{-1}.
\]
Consequently,
\[
\Phi\circ u
=\widetilde\Phi\circ Q\circ u
=\id_{c_0}.
\]

For each \(n\in\mathbb N\), set \(f_n:=u(e_n)\in FBL(L)_+\), and define
\[
x_n^*:=\bigl(\chi_A(\{n\})\bigr)_{A\in L}\in[-1,1]^L.
\]
Since \(\Phi(f_n)=e_n\), the definition of \(\Phi\) gives
\[
f_n(x_n^*)=1
\qquad (n\in\mathbb N).
\]
Moreover, for every finite set \(F\subseteq L\), one can choose
\(n\notin \bigcup F\), and then \(x_n^*|_F=0\).
Therefore \cite[Lemma~2.3]{avilaes-c0}, applied with \(\varepsilon=1\),
yields a subsequence \((f_{n_k})_{k\ge 1}\) such that, for every \(m\in\mathbb N\),
\[
\Bigl\|\sum_{k=1}^m f_{n_k}\Bigr\|\ge m-1.
\]
On the other hand,
\[
\Bigl\|\sum_{k=1}^m f_{n_k}\Bigr\|
=\Bigl\|u\Bigl(\sum_{k=1}^m e_{n_k}\Bigr)\Bigr\|
\le \|u\|\,\Bigl\|\sum_{k=1}^m e_{n_k}\Bigr\|_{c_0}
=\|u\|,
\]
a contradiction.
\end{proof}

\begin{corollary}\label{cor:sequence-lattices-not-semi}
Let \(\Gamma\) be an infinite set.
Then \(c_0(\Gamma)\) is not semiprojective.
If \(1<p<\infty\), then \(\ell_p(\Gamma)\) is not semiprojective either.
\end{corollary}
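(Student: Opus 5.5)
The plan is to reduce to the countable case via retracts, Proposition~\ref{prop:retract}. For infinite $\Gamma$, fix a countably infinite subset $\Gamma_0\subseteq\Gamma$ and a bijection $\Gamma_0\cong\mathbb N$. Let $i\colon c_0\to c_0(\Gamma)$ extend by zero off $\Gamma_0$, and let $r\colon c_0(\Gamma)\to c_0$ restrict to the coordinates in $\Gamma_0$. Both maps are contractive lattice homomorphisms: they act coordinatewise, so they commute with $|\cdot|$, and they do not increase the supremum norm. Moreover $r\circ i=\id_{c_0}$. Thus $c_0$ is a contractive lattice retract of $c_0(\Gamma)$.

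If $c_0(\Gamma)$ were semiprojective, Proposition~\ref{prop:retract} would make $c_0$ semiprojective, contradicting Corollary~\ref{cor:c0-not-semi}. The same argument works verbatim for $\ell_p(\Gamma)$ with $1<p<\infty$. Zero extension and restriction to $\Gamma_0$ are again coordinatewise, hence lattice homomorphisms, and they are contractive in the $\ell_p$-norm. So $\ell_p\cong\ell_p(\Gamma_0)$ is a contractive retract of $\ell_p(\Gamma)$, and Corollary~\ref{cor:ell-p-not-semi} yields the contradiction.

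There is no real obstacle. The only point to check is that Proposition~\ref{prop:retract} places no separability hypothesis on $P$, and it does not, since separability is only imposed on the target $B$. For uncountable $\Gamma$, Corollary~\ref{cor:c0-not-semi} cannot be applied to $c_0(\Gamma)$ directly. This is why the retract argument is needed rather than, say, the disjointness reduction of Theorem~\ref{thm:B}.
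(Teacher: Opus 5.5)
Your proposal is correct and is essentially the paper's own proof. The paper likewise fixes a countably infinite $\Delta\subseteq\Gamma$, uses the coordinate embedding and coordinate projection as contractive lattice homomorphisms with $r\circ i=\mathrm{id}$, and combines Proposition~\ref{prop:retract} with Corollaries~\ref{cor:c0-not-semi} and~\ref{cor:ell-p-not-semi} to reach the contradiction.
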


\begin{proof}
Choose a countably infinite subset \(\Delta\subseteq\Gamma\).

For \(c_0(\Gamma)\), the coordinate embedding and coordinate projection furnish contractive
lattice homomorphisms
\[
i_0\colon c_0(\Delta)\to c_0(\Gamma), \qquad
r_0\colon c_0(\Gamma)\to c_0(\Delta),
\]
with \(r_0\circ i_0=\id_{c_0(\Delta)}\).
Hence semiprojectivity of \(c_0(\Gamma)\) would imply semiprojectivity of
\(c_0(\Delta)\cong c_0\) by Proposition~\ref{prop:retract}, contradicting
Corollary~\ref{cor:c0-not-semi}.

Now let \(1<p<\infty\). Again the coordinate embedding and coordinate projection yield
contractive lattice homomorphisms
\[
i_p\colon \ell_p(\Delta)\to \ell_p(\Gamma), \qquad
r_p\colon \ell_p(\Gamma)\to \ell_p(\Delta),
\]
with \(r_p\circ i_p=\id_{\ell_p(\Delta)}\).
Therefore semiprojectivity of \(\ell_p(\Gamma)\) would imply semiprojectivity of
\(\ell_p(\Delta)\cong \ell_p\) by Proposition~\ref{prop:retract}, contradicting
Corollary~\ref{cor:ell-p-not-semi}.
\end{proof}

\begin{proposition}\label{prop:finite-dim}
Every finite-dimensional Banach lattice is $1^+$-projective, and therefore semiprojective.
In particular, for every $n\in\mathbb N$ and every $1\le p\le \infty$,
the Banach lattice $\ell_p^n$ is semiprojective.
\end{proposition}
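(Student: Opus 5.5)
Every finite-dimensional Banach lattice $E$ is lattice isomorphic to $\mathbb R^n$ with the coordinatewise order. This is classical: an Archimedean finite-dimensional vector lattice has a basis of pairwise disjoint atoms $a_1,\dots,a_n\ge0$. The norm on $E$ is then some lattice norm on $\mathbb R^n$. The plan is to reduce the statement to $1^+$-projectivity of $\mathbb R$ \cite[Corollary~10.6]{dpw}, together with a finite-dimensional lifting argument in the spirit of de~Pagter and Wickstead. In fact I expect that \cite{dpw} already contains the statement that finite-dimensional Banach lattices are $1^+$-projective (with the disjoint-lifting lemma there). The first thing I would do is locate and cite it. The remaining assertions then follow at once: $1^+$-projectivity implies semiprojectivity (take $n=1$, exactly as in the proof of Theorem~\ref{thm:B}), and $\ell_p^n$ is a finite-dimensional Banach lattice.

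If a self-contained argument is wanted, I would proceed as follows. Let $T\colon E\to X/J$ be a lattice homomorphism and $\varepsilon>0$. The images $y_i:=T a_i$ are pairwise disjoint positive elements of $X/J$. First lift each $y_i$ to a positive $x_i\in X$ with $\|x_i\|$ close to $\|y_i\|$. Then make the lifts disjoint, using the standard trick
\[
x_i' := \Bigl(x_i-\sum_{j\ne i}x_j\Bigr)^+ .
\]
Since $Q$ is a lattice homomorphism and the $y_i$ are disjoint, $Q x_i' = (y_i-\sum_{j\ne i}y_j)^+=y_i$. Moreover $0\le x_i'\le x_i$, and for $i\ne j$ one checks $x_i'\wedge x_j'=0$. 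Define $\widehat T(\sum t_i a_i):=\sum t_i x_i'$. This is a lattice homomorphism because disjointness of the $x_i'$ gives $|\sum t_i x_i'|=\sum|t_i|x_i'$, and $Q\circ\widehat T=T$.

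The main obstacle is the norm estimate $\|\widehat T\|\le(1+\varepsilon)\|T\|$. Lifting the atoms individually with small norms does not control $\|\sum |t_i| x_i'\|$ against $\|\sum|t_i|y_i\|$, since the norm on $E$ is arbitrary. I would handle this as follows.

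1. Compactness of the positive part of the unit sphere of $E$ (finite dimension) reduces the estimate to finitely many positive vectors $v_1,\dots,v_N$ forming a $\delta$-net.
2. For each $k$, choose a lift $z_k\ge0$ of $Tv_k$ with $\|z_k\|\le\|Tv_k\|+\delta$.
3. Replace $x_i'$ by $x_i'\wedge\bigwedge_k c_{ik}z_k$ for suitable scalars $c_{ik}$ chosen so that $Q$ still sends it to $y_i$. Since $y_i\le c\,Tv_k$ on the support of $a_i$ in $v_k$, this truncation caps the norm of $\widehat T v_k$ by that of $z_k$ up to $\delta$.
4. A convexity/net argument then passes from the net to all of $E$ with a loss of $1+O(\delta)$.

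Choosing $\delta$ small in terms of $\varepsilon$ and $n$ finishes the argument.
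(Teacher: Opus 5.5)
Your primary plan matches the paper's proof: the paper cites \cite[Theorem~11.1]{dpw} for $1^+$-projectivity of finite-dimensional Banach lattices, and then notes that composing a lift into $B$ with a quotient map $Q_n\colon B\to B/I_n$ (for instance $n=1$, as you say) gives the required semiprojective lift. Your optional self-contained sketch is also sound (writing $v_k=\sum_i t_{ik}a_i$ and $x_i''$ for the truncated lifts, step~3 works because the $x_i''$ are pairwise disjoint, so $\widehat T v_k=\bigvee_i t_{ik}x_i''\le z_k$), but step~4 needs the a priori bound $\|\widehat T\|\le C_E(\|T\|+1)$, so $\delta$ must be chosen in terms of $\varepsilon$ and the norm of $E$, not only $n$.
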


\begin{proof}
By \cite[Theorem~11.1]{dpw}, every finite-dimensional Banach lattice is projective
in the sense of \cite{dpw}, that is, $1^+$-projective in the terminology of
Definition~\ref{def:projective}.
Since $1^+$-projectivity implies semiprojectivity
(after lifting to $B$ one composes with any quotient map $Q_n\colon B\to B/I_n$),
the claim follows.
\end{proof}

\begin{proposition}\label{prop:hom-zero-obstruction}
Let \(E\) be a non-zero separable Banach lattice such that
\(\Hom(E,\mathbb R)=\{0\}\).
Then \(E\) is not semiprojective.
\end{proposition}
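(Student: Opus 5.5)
Our plan is to derive a contradiction from Proposition~\ref{prop:sep-quotient-lift} by realising $E$ as a quotient of a separable Banach lattice that has many real-valued lattice homomorphisms, namely a free Banach lattice. Choose a countable dense subset $D\subseteq E$ (which exists since $E$ is separable) and consider $FBL(D)$. By \cite[Theorem~8.4]{dpw} it is separable. By its universal property, the map $\delta_d\mapsto d$ extends to a lattice homomorphism $\Phi\colon FBL(D)\to E$. Its range is a closed sublattice containing $D$ once we rescale suitably (the universal property requires bounded maps, so we use $D$ inside the unit ball and then scale). The range is then dense, and we want it to be all of $E$. To secure surjectivity we will use a standard fact: for free Banach lattices the induced quotient map is onto. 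Alternatively, we will cite \cite{dpw}, where every Banach lattice $E$ with density character $|D|$ is shown to be a quotient of $FBL(D)$; this is achieved by taking $D$ to be the whole unit ball's dense subset and invoking an open-mapping/series argument, i.e.\ writing $x=\sum d_k$ with $\sum\|d_k\|<\infty$, $d_k\in$ multiples of $D$. Then $\widetilde\Phi\colon FBL(D)/\ker\Phi\to E$ is a Banach lattice isomorphism.

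Assume $E$ is semiprojective. By Remark~\ref{rem:bounded-homogeneity} and Proposition~\ref{prop:sep-quotient-lift}, applied to the bounded lattice homomorphism $\widetilde\Phi^{-1}\colon E\to FBL(D)/\ker\Phi$, there is a lattice homomorphism $u\colon E\to FBL(D)$ with $Q\circ u=\widetilde\Phi^{-1}$. Hence $\Phi\circ u=\id_E$, and in particular $u$ is injective, so $u\neq 0$ because $E\neq\{0\}$.

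Now use that the real-valued lattice homomorphisms on $FBL(D)$ separate points. Indeed, $FBL(D)$ is represented as a lattice of functions on $[-1,1]^D$ (or $\mathbb R^D$) via the construction in \cite{dpw}, and for each point $x^*$ the evaluation $f\mapsto f(x^*)$ is a lattice homomorphism $FBL(D)\to\mathbb R$. Pick $e\in E$ with $u(e)\neq 0$; then there is $x^*$ with $u(e)(x^*)\ne 0$. The composition $\mathrm{ev}_{x^*}\circ u$ is then a non-zero element of $\Hom(E,\mathbb R)$, contradicting the hypothesis.

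The main obstacle is the surjectivity of $\Phi$ onto $E$: density of the range of a lattice homomorphism does not by itself give surjectivity, so we must either quote the quotient statement from \cite{dpw} or run the absolutely summable series argument with a generating set that is dense in the unit ball (countable, hence $FBL(D)$ is still separable). The point-separation by evaluations is a routine consequence of the function representation of $FBL$.
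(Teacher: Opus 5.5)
Your proof is correct and is essentially the paper's argument: realise $E$ as a quotient of a separable free Banach lattice, use Proposition~\ref{prop:sep-quotient-lift} to obtain a lattice-homomorphic section $u$, then compose with point evaluations, which separate points of the free lattice and so produce a non-zero element of $\Hom(E,\mathbb R)$. The only difference is the choice of free lattice. You use $FBL(D)$ over a countable dense subset $D$ of the unit ball, so surjectivity must come from the absolutely summable series argument you describe. It cannot come from your initial claim that the range is a closed sublattice, since closed range is not automatic for lattice homomorphisms. The paper instead uses $FBL[E]$, where the universal property directly supplies the surjection $\beta_E$ with $\beta_E\circ\phi_E=\id_E$.
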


\begin{proof}
Assume towards a contradiction that \(E\) is semiprojective.
Let \(FBL[E]\) be the free Banach lattice generated by the underlying Banach space of \(E\),
and let \(\phi_E\colon E\to FBL[E]\) be the canonical linear isometry.
By \cite[Definition~2.2 and Theorem~2.4]{aviles-fblE}, the identity operator on \(E\)
extends to a surjective lattice homomorphism
\[
\beta_E\colon FBL[E]\to E
\]
satisfying \(\beta_E\circ \phi_E=\id_E\).
Set \(J:=\ker \beta_E\).

Since \(E\) is separable, so is \(FBL[E]\): if \(D\subseteq E\) is countable and dense, then
the closed sublattice generated by \(\phi_E(D)\) contains \(\phi_E(E)\), hence equals \(FBL[E]\).

Let \(Q\colon FBL[E]\to FBL[E]/J\) be the quotient map, and let
\(\widetilde\beta_E\colon FBL[E]/J\to E\) be the induced Banach lattice isomorphism.
By Remark~\ref{rem:bounded-homogeneity} and
Proposition~\ref{prop:sep-quotient-lift}, applied to the bounded lattice homomorphism
\(\widetilde\beta_E^{-1}\colon E\to FBL[E]/J\),
there exists a lattice homomorphism \(u\colon E\to FBL[E]\) such that
\[
Q\circ u=\widetilde\beta_E^{-1}.
\]
Consequently,
\[
\beta_E\circ u=\widetilde\beta_E\circ Q\circ u=\id_E.
\]

Now fix \(x^*\in E^*\).
By \cite[Theorem~2.4]{aviles-fblE}, there exists a lattice homomorphism
\(\widehat{x^*}\colon FBL[E]\to\mathbb R\) such that
\(\widehat{x^*}\circ \phi_E=x^*\).
Since \(u\) is a lattice homomorphism, so is \(\widehat{x^*}\circ u\colon E\to\mathbb R\).
Our assumption \(\Hom(E,\mathbb R)=\{0\}\) therefore yields
\[
\widehat{x^*}\circ u=0
\qquad (x^*\in E^*).
\]
Hence
\[
u(E)\subseteq \bigcap_{x^*\in E^*}\ker \widehat{x^*}.
\]

In the concrete realization of \(FBL[E]\) from \cite[Definition~2.2]{aviles-fblE},
elements of \(FBL[E]\) are positively homogeneous functions on \(E^*\), and
\(\widehat{x^*}(f)=f(x^*)\) for every \(f\in FBL[E]\).
Therefore
\[
\bigcap_{x^*\in E^*}\ker \widehat{x^*}=\{0\}.
\]
Thus \(u(E)=\{0\}\), contradicting \(\beta_E\circ u=\id_E\) and \(E\neq\{0\}\).
\end{proof}

\begin{corollary}\label{cor:Lp-unit-interval-not-semi}
For every \(1\le p<\infty\), the Banach lattice \(L_p([0,1])\) is not semiprojective.
\end{corollary}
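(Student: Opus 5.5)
The natural route is Proposition~\ref{prop:hom-zero-obstruction}. The space \(L_p([0,1])\) is non-zero and separable for \(1\le p<\infty\), so the only thing to check is that
\[
\Hom\bigl(L_p([0,1]),\mathbb R\bigr)=\{0\}.
\]
Once this holds, the proposition immediately gives that \(L_p([0,1])\) is not semiprojective. No new lifting or path-space construction is needed, because all of that work is already packaged in Proposition~\ref{prop:sep-quotient-lift} and used in the proof of Proposition~\ref{prop:hom-zero-obstruction}.

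To show that real-valued lattice homomorphisms vanish, I will argue by atomlessness. Let \(\varphi\colon L_p([0,1])\to\mathbb R\) be a lattice homomorphism. It is positive and bounded, and its kernel is a closed ideal.

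Fix a measurable set \(A\subseteq[0,1]\). Split it into two disjoint measurable pieces \(A=A_1\cup A_2\) with \(\lambda(A_1)=\lambda(A_2)=\lambda(A)/2\). Then \(\chi_{A_1}\wedge\chi_{A_2}=0\), so
\[
\min\bigl(\varphi(\chi_{A_1}),\varphi(\chi_{A_2})\bigr)=0,
\]
and hence \(\varphi(\chi_A)=\varphi(\chi_{A_i})\) for some \(i\in\{1,2\}\). Iterating this, I obtain sets \(A\supseteq B_1\supseteq B_2\supseteq\cdots\) with \(\lambda(B_k)=2^{-k}\lambda(A)\) and \(\varphi(\chi_{B_k})=\varphi(\chi_A)\). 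Therefore
\[
|\varphi(\chi_A)|\le\|\varphi\|\,\|\chi_{B_k}\|_p=\|\varphi\|\,\bigl(2^{-k}\lambda(A)\bigr)^{1/p}\to0,
\]
because \(p<\infty\). So \(\varphi\) vanishes on every indicator function, hence on all simple functions by linearity. Simple functions are dense in \(L_p\) for \(p<\infty\), so \(\varphi=0\) by continuity.

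The main point needing care is this density/continuity step together with the shrinking-norm estimate, and both genuinely use \(p<\infty\). For \(p=\infty\) the argument fails, since point-evaluation-type homomorphisms exist on \(L_\infty\); this is consistent with the statement excluding that case.

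Beyond that, the proof is just invoking Proposition~\ref{prop:hom-zero-obstruction}. The step I expect to be the real obstacle was already handled in that proposition, namely the existence of the lattice homomorphism lift \(u\colon E\to FBL[E]\) and the separation of points of \(FBL[E]\) by the evaluations \(\widehat{x^*}\). So here the only nontrivial verification is the vanishing of \(\Hom(L_p,\mathbb R)\).
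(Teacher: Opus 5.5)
Your proposal is correct and follows the paper's route: the paper also deduces the corollary from Proposition~\ref{prop:hom-zero-obstruction}, but simply cites \cite[Example 1(iii)]{dantas2022} for $\Hom(L_p([0,1]),\mathbb R)=\{0\}$. Your halving argument proves this vanishing directly, and it is essentially the same disjoint-splitting argument the paper gives for Orlicz hearts in Proposition~\ref{prop:orlicz-heart-hom-zero}. One small correction to your side remark: simple functions are norm-dense in $L_\infty$ as well, so only the shrinking estimate $\|\chi_{B_k}\|_p\to 0$ actually needs $p<\infty$.
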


\begin{proof}
The space \(L_p([0,1])\) is non-zero and separable.
By Proposition~\ref{prop:hom-zero-obstruction}, it suffices to know that
\(\Hom(L_p([0,1]),\mathbb R)=\{0\}\); this follows from \cite[Example 1(iii)]{dantas2022}.
\end{proof}

\begin{proposition}\label{prop:orlicz-heart-hom-zero}
Let \((\Omega,\Sigma,\mu)\) be an atomless measure space and let \(\Phi\) be an Orlicz
function. Denote by \(H^\Phi(\mu)\) the corresponding Orlicz heart,
\[
H^\Phi(\mu):=
\Bigl\{f\in L^0(\mu): \int_\Omega \Phi(|f|/\lambda)\,d\mu<\infty
\text{ for every }\lambda>0\Bigr\},
\]
equipped with the Luxemburg norm. Then
\[
\Hom\bigl(H^\Phi(\mu),\mathbb R\bigr)=\{0\}.
\]
In particular, if \(H^\Phi(\mu)\) is non-zero and separable, then \(H^\Phi(\mu)\) is not semiprojective.

Consequently, if \(L^\Phi(\mu)=H^\Phi(\mu)\), then
\[
\Hom\bigl(L^\Phi(\mu),\mathbb R\bigr)=\{0\}.
\]
If moreover \(L^\Phi(\mu)\) is non-zero and separable, then \(L^\Phi(\mu)\) is not semiprojective.
\end{proposition}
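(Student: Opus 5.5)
The plan is to show that every lattice homomorphism $\varphi\colon H^\Phi(\mu)\to\mathbb R$ vanishes. The tool is the standard structure of real-valued lattice homomorphisms, combined with atomlessness and order continuity of the Orlicz heart. Once $\Hom(H^\Phi(\mu),\mathbb R)=\{0\}$ is established, the non-semiprojectivity of a non-zero separable $H^\Phi(\mu)$ follows at once from Proposition~\ref{prop:hom-zero-obstruction}. The statement about $L^\Phi(\mu)$ is then immediate, since the two spaces coincide as Banach lattices under the hypothesis $L^\Phi=H^\Phi$.

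For the key step, suppose $\varphi\ne 0$ is a lattice homomorphism. Then $\varphi$ is positive and bounded. Since lattice homomorphisms to $\mathbb R$ have kernels that are ideals of codimension one, and $\varphi(f\wedge g)=\min\{\varphi(f),\varphi(g)\}$, disjoint positive elements cannot both have positive image. Pick $f\ge 0$ with $\varphi(f)>0$. By atomlessness, for each $N$ the support of $f$ splits into $N$ measurable pieces $A_1,\dots,A_N$ on which the pieces $f\chi_{A_i}$ are ``small''. Concretely, I will choose the $A_i$ so that $\|f\chi_{A_i}\|\to0$ as $N\to\infty$ uniformly in $i$. This is possible because the Luxemburg norm on $H^\Phi$ is order continuous: $\|f\chi_A\|\to 0$ as $\int_A\Phi(|f|/\lambda)\,d\mu\to0$ for every $\lambda$. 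An atomless measure lets us split $\mu$ restricted to $\{f>0\}$, weighted by $\Phi(f/\lambda)$, into arbitrarily small parts; for $\sigma$-finiteness, first restrict to a subset where $f$ carries most of $\varphi(f)$, or note that $\{f>0\}$ is $\sigma$-finite since $f\in H^\Phi$. Since $f=\sum_i f\chi_{A_i}$ with the summands pairwise disjoint, at most one summand has nonzero image. Hence $\varphi(f)=\varphi(f\chi_{A_{i_0}})\le\|\varphi\|\,\|f\chi_{A_{i_0}}\|$, and the right-hand side tends to $0$ as $N\to\infty$, which is a contradiction.

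The main obstacle is the uniform smallness of the pieces. I would use the atomless measure $\nu(A):=\int_A\Phi(f/\lambda_0)\,d\mu$, which is finite for each fixed $\lambda_0>0$ because $f\in H^\Phi$. Lyapunov/Sierpiński splitting of the atomless $\nu$ gives sets $A_i$ with $\nu(A_i)\le\nu(\Omega)/N$. For the norm estimate, take $\lambda_0=\delta$. Once $\nu(A_i)\le1$ we get $\|f\chi_{A_i}\|\le\delta$, since $\int\Phi(f\chi_{A_i}/\delta)\,d\mu\le1$. As $\delta$ is arbitrary, this gives the needed convergence without invoking order continuity abstractly. One must check that $\nu$ is atomless: if $\nu(A)>0$ then $\mu(A)>0$ and $A$ splits under $\mu$, hence also under $\nu$ by absolute continuity; for infinite-measure pieces I would use $\sigma$-finiteness of $\{f>0\}$. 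This is the main point to verify carefully.
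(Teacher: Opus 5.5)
Your proposal is correct and is essentially the paper's proof: the paper likewise fixes $\varepsilon>0$ and splits $\Omega$ with respect to the finite atomless measure $\nu_\varepsilon(A)=\int_A\Phi(|f|/\varepsilon)\,d\mu$ into pieces of $\nu_\varepsilon$-measure at most $1$, so that each $f\mathbf 1_{A_k}$ has Luxemburg norm at most $\varepsilon$. It then uses disjointness of the numbers $\varphi(f\mathbf 1_{A_k})$ to get $\varphi(f)\le\|\varphi\|\,\varepsilon$, and concludes via Proposition~\ref{prop:hom-zero-obstruction}. One small correction concerns your check that $\nu$ is atomless: what transfers a $\mu$-splitting to a $\nu$-splitting is not $\nu\ll\mu$ but the fact that $\mu$ and $\nu$ have the same null sets inside $\{\Phi(f/\delta)>1/n\}$, and this set has finite $\mu$-measure because the density is integrable.
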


\begin{proof}
Suppose that \(\varphi\colon H^\Phi(\mu)\to\mathbb R\) is a non-zero lattice homomorphism.
Choose \(f\in H^\Phi(\mu)_+\) with \(\varphi(f)>0\), and let \(\varepsilon>0\).

Since \(f\in H^\Phi(\mu)\), the function
\[
g_\varepsilon:=\Phi(|f|/\varepsilon)
\]
belongs to \(L_1(\mu)\). Define a finite measure \(\nu_\varepsilon\) on \((\Omega,\Sigma)\) by
\[
\nu_\varepsilon(A):=\int_A g_\varepsilon\,d\mu
\qquad (A\in\Sigma).
\]
Because \(\mu\) is atomless and \(\nu_\varepsilon\ll\mu\), the measure \(\nu_\varepsilon\) is atomless.

Choose \(m\in\mathbb N\) such that \(\nu_\varepsilon(\Omega)/m\le 1\), and partition \(\Omega\)
into measurable sets \(A_1,\dots,A_m\) satisfying
\[
\nu_\varepsilon(A_k)=\frac{\nu_\varepsilon(\Omega)}{m}\le 1
\qquad (k=1,\dots,m).
\]
Set
\[
f_k:=f\mathbf 1_{A_k}\qquad (k=1,\dots,m).
\]
Then the \(f_k\)'s are pairwise disjoint, positive, and
\[
f=\sum_{k=1}^m f_k.
\]
Moreover,
\[
\int_\Omega \Phi(|f_k|/\varepsilon)\,d\mu
=\int_{A_k}\Phi(|f|/\varepsilon)\,d\mu
=\nu_\varepsilon(A_k)\le 1,
\]
hence \(\|f_k\|_\Phi\le \varepsilon\) for every \(k\).

Since \(\varphi\) is a lattice homomorphism, the numbers \(\varphi(f_k)\in\mathbb R_+\)
are pairwise disjoint. Therefore at most one of them is non-zero. On the other hand,
\[
\sum_{k=1}^m \varphi(f_k)=\varphi(f)>0,
\]
so there exists \(k_0\in\{1,\dots,m\}\) such that
\[
\varphi(f_{k_0})=\varphi(f).
\]
Consequently,
\[
0<\varphi(f)=\varphi(f_{k_0})
\le \|\varphi\|\,\|f_{k_0}\|_\Phi
\le \|\varphi\|\,\varepsilon.
\]
Since \(\varepsilon>0\) was arbitrary, this is impossible. Thus
\[
\Hom\bigl(H^\Phi(\mu),\mathbb R\bigr)=\{0\}.
\]

If \(H^\Phi(\mu)\) is non-zero and separable, Proposition~\ref{prop:hom-zero-obstruction}
shows that \(H^\Phi(\mu)\) is not semiprojective.

If \(L^\Phi(\mu)=H^\Phi(\mu)\), the asserted identity
\[
\Hom\bigl(L^\Phi(\mu),\mathbb R\bigr)=\{0\}
\]
is immediate. Under the additional assumptions that \(L^\Phi(\mu)\) is non-zero and separable,
the same proposition yields that \(L^\Phi(\mu)\) is not semiprojective.
\end{proof}

Proposition~\ref{prop:finite-dim} settles the finite-dimensional case,
while Corollary~\ref{cor:ell-p-not-semi} and
Corollary~\ref{cor:Lp-unit-interval-not-semi}
rule out the classical sequence lattices \(\ell_p\) \((1<p<\infty)\) and the function lattices
\(L_p([0,1])\) \((1\le p<\infty)\).
Proposition~\ref{prop:orlicz-heart-hom-zero} extends
\cite[Example~3.3(iii)]{dantas2022}. Recall also that \(H^\Phi(\mu)\) is the
order-continuous part of \(L^\Phi(\mu)\) by
\cite[Proposition~4.7]{gao-leung-xanthos};
thus the additional hypothesis \(L^\Phi(\mu)=H^\Phi(\mu)\) is precisely the
order-continuity of the Luxemburg norm.

\begin{question}\label{q:permanence}
Is the class of semiprojective Banach lattices closed under
suitable finite direct sums $($\emph{e.g.}, $\oplus_1$ or $\oplus_\infty$$)$?
More generally, which permanence properties
$($passage to complemented sublattices, quotients by closed ideals,
Fremlin tensor products$)$ does semiprojectivity enjoy?
As a basic permanence property, semiprojectivity passes to contractive retracts
$($Proposition~$\ref{prop:retract})$.
\end{question}

\begin{remark}
Even the case of the $\ell_1$-sum $A\oplus_1 B$ (or $\ell_\infty$-sum $A\oplus_\infty B$)
appears subtle:
a~lattice homomorphism $\phi\colon A\oplus_1 B\to E/I$ restricts to lattice homomorphisms
on $A$ and $B$ with disjoint ranges in the quotient, but semiprojective lifts of these restrictions
need not have disjoint ranges at a finite stage $E/I_n$.
\end{remark}

In the $C^*$-setting, Loring~\cite{loring} showed that finite direct sums of semiprojective
$C^*$-algebras are semiprojective (in the unital case),
and Blackadar~\cite{blackadar-survey} showed that matrix amplifications
preserve semiprojectivity.
It is natural to expect similar permanence in the Banach-lattice category,
but the available techniques differ substantially from the $C^*$-case.

On the negative side, some natural categorical operations destroy semiprojectivity.

\begin{remark}[Semiprojectivity is not preserved by pullbacks]\label{rem:pullback}
Let $K\subseteq[0,1]$ be the middle--third Cantor set and let
$\rho\colon C([0,1])\to C(K)$ be the restriction map.
Consider the pullback Banach lattice
\[
P:=\{(f,g)\in C([0,1])\oplus_\infty C([0,1]) : f|_K=g|_K\}.
\]
Then $C([0,1])$ is semiprojective $($Theorem~$\ref{thm:A})$, but $P$ is not semiprojective.
\end{remark}

\begin{proof}
Let \(q\colon [0,1]_0\sqcup[0,1]_1\to Z\) be the quotient map.
A continuous function on \(Z\) is equivalently a pair of continuous functions on the two copies
that agree on \(K\), hence \(P\) identifies isometrically (as a Banach lattice) with \(C(Z)\).

We claim that \(Z\) is not an ANR. Since \(Z\) is compact metric, it is enough to show that
\(Z\) fails to be locally contractible at each point of \(q(K)\); indeed, every metric ANR is
locally contractible (see, e.g., \cite{hanner-anr,borsuk}).

Fix \(x=q(t)\) with \(t\in K\), and let \(U\) be a neighbourhood of \(x\) in \(Z\).
Then \(q^{-1}(U)\) is an open subset of \([0,1]_0\sqcup[0,1]_1\) containing both copies of \(t\).
Hence there exists an open interval \(J\subseteq[0,1]\) containing \(t\) such that
\(J_0\cup J_1\subseteq q^{-1}(U)\), where \(J_i\subseteq[0,1]_i\) denotes the copy of \(J\).
Since \([0,1]\setminus K\) is a dense open union of complementary intervals, we may choose a
complementary interval \((a,b)\subseteq J\). Then
\[
L_{a,b}:=q([a,b]_0)\cup q([a,b]_1)\subseteq U,
\]
and \(L_{a,b}\) is a simple closed curve.

We next show that the inclusion \(i\colon L_{a,b}\hookrightarrow Z\) is not null-homotopic.
Identify \(S^1\) with the unit circle in \(\mathbb C\), and define continuous maps
\(F_0,F_1\colon[0,1]\to S^1\) by
\[
F_0(t)=F_1(t)=1 \quad (0\le t\le a),\qquad
F_0(t)=F_1(t)=-1 \quad (b\le t\le 1),
\]
and, for \(t\in[a,b]\),
\[
F_0(t):=\exp\!\Bigl(i\pi \frac{t-a}{b-a}\Bigr),\qquad
F_1(t):=\exp\!\Bigl(-i\pi \frac{t-a}{b-a}\Bigr).
\]
Because \((a,b)\cap K=\varnothing\), the maps \(F_0\) and \(F_1\) agree on \(K\); hence they
descend to a continuous map \(F\colon Z\to S^1\). On the circle \(L_{a,b}\), the restriction
\(F\circ i\) maps the arc \(q([a,b]_0)\) homeomorphically onto the upper semicircle and the arc
\(q([a,b]_1)\) homeomorphically onto the lower semicircle, with matching endpoints \(1\) and \(-1\).
Hence \(F\circ i\) has degree \(1\). Therefore \(F\circ i\) is not null-homotopic, and neither is \(i\).

Now let \(V\subseteq U\) be any neighbourhood of \(x\). As above, choose an open interval
\(J'\subseteq[0,1]\) containing \(t\) with \(J'_0\cup J'_1\subseteq q^{-1}(V)\), and then a
complementary interval \((a',b')\subseteq J'\). The corresponding simple closed curve
\[
L_{a',b'}:=q([a',b']_0)\cup q([a',b']_1)
\]
is contained in \(V\), and the inclusion \(L_{a',b'}\hookrightarrow U\) is not null-homotopic
by the previous paragraph. Since this map factors through \(V\hookrightarrow U\), the inclusion
\(V\hookrightarrow U\) cannot be null-homotopic.

Thus \(Z\) is not locally contractible at \(x\). Since \(x\in q(K)\) was arbitrary, \(Z\) is not
locally contractible, hence \(Z\) is not an ANR. By Theorem~\ref{thm:A}, \(C(Z)\cong P\) is not
semiprojective.
\end{proof}

\begin{remark}[Semiprojectivity is not preserved by quotients]\label{rem:quotients}
Semiprojectivity need not pass to quotients.
For instance, if $K\subseteq[0,1]$ is the Cantor set, then $C([0,1])$ is semiprojective
by Theorem~\ref{thm:A}, whereas $C(K)\cong C([0,1])/I_K$ is not semiprojective.
Consequently, semiprojectivity cannot be preserved by pushouts in general
(since quotients arise as special pushouts along the zero object).
\end{remark}

\begin{question}\label{q:extensions}
Let
\[
0\to J\to A\to B\to 0
\]
be a short exact sequence of Banach lattices.
If \(J\) and \(B\) are semiprojective, must \(A\) be semiprojective?
\end{question}

\begin{remark}
A positive answer to Question~\ref{q:extensions} would imply that semiprojectivity
is closed under finite \(\ell_1\)- and \(\ell_\infty\)-sums, by applying it to the split
exact sequences
\[
0\to J\to J\oplus_1 B\to B\to 0
\qquad\text{and}\qquad
0\to J\to J\oplus_\infty B\to B\to 0.
\]
Thus Question~\ref{q:extensions} is stronger than the finite direct-sum permanence
problem raised in Question~\ref{q:permanence}.

The pullback example of Remark~\ref{rem:pullback} shows that a negative answer is
plausible: if \(K\subseteq[0,1]\) is the Cantor set and
\[
P=\{(f,g)\in C([0,1])\oplus_\infty C([0,1]) : f|_K=g|_K\},
\]
then
\[
0\to I_K\to P\to C([0,1])\to 0
\]
is exact, \(P\) is not semiprojective, and \(C([0,1])\) is semiprojective.
What is presently unclear is whether the ideal \(I_K\) is semiprojective.
\end{remark}

\begin{remark}
In the $C^*$-algebraic setting semiprojectivity is formulated for separable
$C^*$-algebras, and the defining lifting property is unchanged if one restricts
the ambient algebra $B$ to be separable; see \cite[pp.~11--12]{blackadar-survey}.
We do not know whether the analogous reduction holds in the Banach-lattice category.

This suggests a density-sensitive variant of Definition~\ref{def:semi}.
For an infinite cardinal $\kappa$, one could call a Banach lattice $A$
\emph{$\kappa$-semiprojective} if the lifting property is required only for
Banach lattices $B$ with density character $<\kappa$.
If one wants the definition to reflect the ambient cardinal more faithfully,
one should probably also replace the sequential tower $(I_n)$ by an increasing
chain $(I_\alpha)_{\alpha<\lambda}$ of closed ideals indexed by an ordinal
$\lambda<\kappa$.
The present paper treats only the case $\kappa=\omega_1$, where
$\operatorname{dens}(B)<\omega_1$ means that $B$ is separable and countable
chains suffice.
\end{remark}

\subsection*{Acknowledgements}
We are grateful to Leonid Positselski (Prague) for drawing our attention to the
connection between semiprojectivity and finitely presentable objects
in the sense of Ad\'amek and Rosick\'y~\cite{adamek-rosicky};
see Remark~\ref{rem:finitely-presentable}.
The second-named author gratefully acknowledges support received from
NCN Sonata-Bis~13 (2023/50/E/ST1/00067).

\end{document}